\documentclass[12pt]{article}
\usepackage[top=1in, bottom=1in, left=1in, right=1in]{geometry}
\usepackage{graphicx, amsfonts,amssymb,amsthm,amsbsy,latexsym,amscd,amsmath,dsfont,euscript,enumerate,manfnt, marvosym,verbatim, calc,mathrsfs,marvosym,textcomp, color,xcolor,tikz-cd,setspace,stmaryrd,upgreek,bm} 

\usepackage{pgf,tikz,amsmath}
\usepackage{blkarray}
\usepackage{caption}
\usepackage{enumerate}
\usepackage[pagebackref,colorlinks=true,
linkcolor=blue,
urlcolor=red,colorlinks,
citecolor=green]{hyperref}
\usepackage{cleveref}
\usepackage{parskip}
\usepackage{setspace}
\makeatletter
\def\thm@space@setup{%
  \thm@preskip=10pt 
  \thm@postskip=6pt 
}
\makeatother

\usepackage{refcount}

\newtheorem*{theorem*}{Theorem}
\newtheorem{theorem}{Theorem}

\newtheorem{lemma}{Lemma}[section]

\crefname{lemma}{Lemma}{Lemmas}
\crefname{corollary}{Corollary}{Corollaries}

\Crefname{lemma}{Lemma}{Lemmas}
\Crefname{corollary}{Corollary}{Corollaries}

\crefalias{corollary}{corollary}

\newtheorem{claim}{Claim}[theorem]

\crefname{claim}{claim}{claims}
\Crefname{claim}{Claim}{Claims}

\crefname{subclaim}{subclaim}{subclaims}
\Crefname{subclaim}{Subclaim}{Subclaims}

\theoremstyle{definition}
\newtheorem*{definition}{Definition}

\newtheorem*{example}{Example}
\newtheorem*{acknowledgement}{Acknowledgement}

\newcommand{\OO}{\mathcal O}

\newcommand{\E}{\mathbb E}

\newcommand{\Q}{\mathbb Q}

\newcommand{\1}{\mathbf 1}

\newcommand{\Ind}{\operatorname{Ind}}

\newcommand{\Fix}{\operatorname{Fix}}
\newcommand{\Cay}{\operatorname{Cay}}

\title{Even-Intersecting Families of Permutations}
\author{
    Anirban Banerjee\thanks{\text{anirban.banerjee@iiserkol.ac.in}} \qquad
    Abisek Dewan\thanks{\text{ad22rs069@iiserkol.ac.in}} \qquad
    Rajiv Mishra\thanks{\text{rm20rs017@iiserkol.ac.in}} \\[0.8em]
    \small Department of Mathematics and Statistics, IISER Kolkata, India
}
\date{ }

\begin{document}

\maketitle

\begin{abstract}
    A family of permutations in $S_n$ is called even-intersecting if every two distinct members agree in an even number of positions. Let $M(n)$ denote the maximum size of such a family. For even $n$, we prove that
    $$n!!\leq M(n)\leq e^{\frac{n}{2}+o(n)}n!!,$$
    improving the bound obtained from a theorem of Cameron, Deza and Frankl (1987) by an exponential factor. 
    This problem may be viewed as a permutation analogue of the classical Eventown problem for set systems. For odd $n$, we give a construction yielding $M(n)\geq n^2/4$. We further extend this construction to obtain
    $$ M(n)\geq\bigl(n-\sqrt{n-1}\bigr)^2 ,$$
   whenever $n=(q+1)^2+1$ and $q$ is an odd prime power. The latter bound asymptotically matches the upper bound $M(n)\leq(n-1)^2+1$ obtained by  Cameron, Deza and Frankl.
\end{abstract}

\noindent {\bf Key words:} Intersecting Family; Even-intersecting Permutations; Eventown; Irreducible  Character; Symmetric Functions.

\noindent {\bf AMS Subject Classification:} 05D99, 05A05 (Primary); 05E10,  05E05, 20C30 (Secondary).

\section{Introduction}

 Intersection problems are a central theme in extremal combinatorics. Given a collection of combinatorial objects equipped with a suitable notion of intersection, one seeks to determine the largest family satisfying a prescribed intersection condition. The classical example is the Erd\H{o}s--Ko--Rado theorem. A family $\mathcal{F}\subseteq\binom{[n]}{k}$ is called intersecting if
\[
A\cap B\neq\varnothing
\qquad\text{for every } A,B\in\mathcal{F}.
\]
The Erd\H{o}s--Ko--Rado theorem \cite{erdos1961intersection} states that, for $n\geq 2k$,
\[
|\mathcal{F}|\leq \binom{n-1}{k-1},
\]
with equality, for $n>2k$, precisely for a star  i.e, a family of all $k$-sets containing a fixed element. This theorem has motivated a large body of work on intersection problems for sets and for many other combinatorial objects; see, for example, the monograph of Gerbner and Patk\'os~\ \cite{gerbner2018extremal}, Intersection Problems in Extremal Combinatorics, a survey by David Ellis \cite{ellis2022intersection}, Erdõs–Ko–Rado Theorems: Algebraic Approaches, a book by Godsil and Meagher \cite{godsil1945erdos}.

One natural setting for such questions is the symmetric group $S_n$. For $\sigma,\pi\in S_n$, define their intersection
\[
\sigma \cap \pi
=
\{i\in[n]:\sigma(i)=\pi(i)\},
\]
the number of positions at which $\sigma$ and $\pi$ agree or equivalently the set of fixed points of $\sigma \pi^{-1}$. A family $\mathcal{F}\subseteq S_n$ is called intersecting if
$|\sigma \cap \pi| \ge 1$ for every $\sigma,\pi \in \mathcal{F}$.
Deza and Frankl~\cite{frankl1977maximum} proved that every intersecting family $\mathcal{F}\subseteq S_n$ has size at most $(n-1)!$, with equality attained by the family
\[
\{\sigma\in S_n:\sigma(i)=j\}
\]
for fixed $i,j\in[n]$. Cameron and Ku~\cite{cameron2003intersecting}, independently of Larose and Malvenuto~\cite{larose2004stable}, characterized the extremal families. More generally, the study of $t$-intersecting families of permutations, in which every two permutations agree in at least $t$ positions, was initiated by Deza and Frankl and subsequently developed by Ellis, Friedgut and Pilpel~\cite{ellis2011intersecting} and by subsequent work of Keller, Lifshitz, Minzer and Sheinfeld~\cite{keller2024t}. These results establish a rich EKR-type theory for permutations.

Our interest, however, is in a different type of intersection condition: rather than requiring two objects to have a positive number of common elements, we require their intersection to have prescribed parity.
A natural example of a parity-based intersection problem is the classical \emph{Eventown problem}  on sets. A family $\mathcal{F}\subseteq 2^{[n]}$ is called an \emph{Eventown family} if $|A\cap B|\equiv0\pmod 2$ for all $A,B\in\mathcal{F}$. In particular, every member of $\mathcal{F}$ has even cardinality. Answering a question of Erd\H{o}s, Berlekamp~\cite{berlekamp1969subsets} proved that $|\mathcal{F}|\leq2^{\lfloor n/2\rfloor}$, and the bound is attained by the \emph{atomic construction}: partition $2\lfloor n/2\rfloor$ elements of $[n]$ into pairs and take $\mathcal{F}$ to be the family of all unions of these pairs. The same bound was independently obtained by Graver~\cite{graver1975boolean}. The optimality of the atomic construction has a particularly elegant linear-algebraic interpretation which can be found in Babai and Frankl's \emph{Linear Algebra Methods in Combinatorics}~\cite{babai1992linear}.

Here, we investigate the corresponding problem for permutations. 
We call a family $\mathcal{F}\subseteq S_n$ \emph{even-intersecting} if
\[
|\sigma \cap \pi|\equiv0\pmod 2
\qquad\text{for every distinct } \sigma,\pi\in\mathcal{F}.
\]
Equivalently,
\[
\big|\Fix(\sigma\pi^{-1})\big|
\equiv0\pmod2
\qquad\text{for every distinct } \sigma,\pi\in\mathcal{F}.
\]
When $n$ is even the condition $|\sigma \cap \pi| \equiv 0\pmod2$ holds for all $\sigma,\pi \in \mathcal{F}$. Thus, for even $n$, our problem may be viewed as an Eventown problem in which subsets are replaced by permutations and set intersection is replaced by agreement in positions.
Define 
$$M(n):=\max\left\{|\mathcal{F}|:\mathcal{F}\subseteq S_n \text{ is even-intersecting}\right\}.$$
We are interested in the upper and lower bound of $M(n)$.

In 1987, Cameron, Deza and Frankl \cite{cameron1987sharp} studied a more general intersection
problem for permutations. For a set
$L\subseteq\{0,1,\ldots,n-1\}$, a family $\mathcal{F}\subseteq S_n$ is called
$L$-intersecting if, for every two distinct $\sigma,\tau\in\mathcal{F}$, the
size of their intersection, $|\sigma\cap \tau|$, belongs to $L$. They
denoted the maximum size of such a family by $m(n,L)$. Thus 
the even-intersecting problem corresponds to $
L=\{0, 2, 4,\ldots\},
$ and hence $M(n)=m(n,L)$, in this case.

 Cameron, Deza and Frankl  \cite{cameron1987sharp} proved that if $p$ is a prime such that no
element of $L$ is congruent to $n$ modulo $p$, and if $L$ is contained in $s$
residue classes modulo $p$, then
$$
m(n,L)
\leq
\sum_{\substack{\lambda\vdash n\\ n-\lambda_1\leq s}}
(f^\lambda)^2.
$$
For $L=\{0,2,\ldots,n-2\}$ and even $n$, taking a prime
$p>n$ gives $s=n/2$. Applying their theorem and evaluating
the resulting character sum up to polynomial factors (see the \Cref{lem:CDF-sum-asymptotic} in the appendix \ref{app:CDF-asymptotic}), we obtain
\begin{equation}
M(n)
\leq
\sum_{\substack{\lambda\vdash n\\ \lambda_1\geq n/2}}
(f^\lambda)^2= 8^{{n}/{2}+O(\log n)}n!!.
\label{eq:CDF-even-bound}
\end{equation}

Cameron, Deza and Frankl also posed a more specific problem concerning
intersection sizes divisible by a fixed integer. In
Problem~5.12 of \cite{cameron1987sharp}, they asked whether, for sufficiently large
$r$, every family of permutations on $rm$ points whose pairwise intersection
sizes belong to $
\{0,m,2m,\ldots,(r-1)m\}
$ has size at most $r!\,m^r.$

Taking $m=2$ and writing $n=2r$, this specializes to the even-intersecting
problem and predicts the bound
$$
M(n)\leq n!!
$$
for all sufficiently large even $n$. 

For even $n$, we obtain an upper bound for $M(n)$ that is exponentially stronger than the Cameron--Deza--Frankl bound in \Cref{eq:CDF-even-bound}. More precisely, we prove the following.

\begin{theorem}\label{thm:main_result}
Let $M(n)$ be the maximum size of an even-intersecting family of $S_n$.
    Then, there exist absolute constants
$C>0$ and $n_0 \in \mathbb{N}$ such that, for every even $n\geq n_0$,
\begin{equation*}
n!!\leq M(n)
\leq e^{{n}/{2}+C n^{3/4}\log n}\,n!!,
\label{eq:main-even-intro}
\end{equation*}
where the lower bound is attained by $\mathcal{F}=S_2\wr S_{n/2}$.
\end{theorem}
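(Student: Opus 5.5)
The lower bound is a direct check, and the upper bound will come from a spectral (Delsarte-type) argument over the conjugacy-class association scheme of $S_n$.

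\textbf{Lower bound.} Fix a perfect matching $\{\{1,2\},\{3,4\},\ldots,\{n-1,n\}\}$ of $[n]$. Let $\mathcal F=S_2\wr S_{n/2}$ be its stabiliser, which has order $2^{n/2}(n/2)!=n!!$. For distinct $\sigma,\pi\in\mathcal F$ we have $g=\sigma\pi^{-1}\in\mathcal F$. If $g(i)=i$, then $g$ maps the block of $i$ to itself, so it also fixes the partner of $i$. Hence $\Fix(g)$ is a union of blocks and $|\Fix(g)|$ is even, so $\mathcal F$ is even-intersecting and $M(n)\ge n!!$.

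\textbf{Upper bound: setup.} View $\mathcal F$ as a clique in the Cayley graph $\Cay(S_n,E)$, where $E$ is the set of non-identity permutations with an even number of fixed points. Equivalently, $\mathcal F$ is an independent set in $\Cay(S_n,O)$, where $O$ is the set of permutations with an odd number of fixed points. I will apply a weighted Hoffman/Delsarte bound in the conjugacy-class scheme. Take a class function $\phi=\sum_{C\subseteq O} w_C \1_C$, supported on odd-fixed-point classes. Its eigenvalue on the $\lambda$-isotypic component is
$$\theta_\lambda=\frac{1}{f^\lambda}\sum_{C} w_C |C|\chi^\lambda(C).$$
If $\theta_{(n)}=1$ and $\theta_\lambda\ge -\tau$ for all $\lambda$, then
$$|\mathcal F|\le n!\,\frac{\tau}{1+\tau}.$$
This is the linear-programming relaxation that underlies Cameron--Deza--Frankl, but I will keep the full LP freedom instead of an annihilating polynomial in $|\Fix|$.

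\textbf{Upper bound: choice of weights.} I would take $\phi$ to be a weighted combination of the classes whose cycle type is $1^{2j+1}2^{(n-2j-1)/2}$ (odd fixed points, all other cycles transpositions), with weights depending on $j$. On these classes the characters are computable through symmetric functions: $\sum_\lambda \chi^\lambda(1^a2^b)s_\lambda=p_1^ap_2^b$, and $p_2$ expands in hooks with signs. This makes $\theta_\lambda$ expressible through the ``domino/2-quotient'' structure of $\lambda$. The guiding heuristic is that $S_2\wr S_{n/2}$ should be (near-)extremal, so the weights should be chosen so that $\theta_\lambda$ is minimised on constituents of the permutation character $\Ind_{S_2\wr S_{n/2}}^{S_n}\1$. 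These constituents are the $\lambda$ with all parts even, and there are $n!/n!!$ of them counted with dimension, which gives a target ratio of about $n!!/n!$. The aim is to bound $\tau$ by
$$\tau\le \frac{n!!}{n!}\,e^{n/2+Cn^{3/4}\log n}.$$

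\textbf{Main obstacle.} The hard step is proving the uniform lower bound $\theta_\lambda\ge-\tau$ over all $\lambda\vdash n$. The first thing I would try is to split partitions by $n-\lambda_1$ and by the length of $\lambda$ at a threshold $n^{3/4}$.
\begin{itemize}
\item For $\lambda$ far from both $(n)$ and $(1^n)$, use the standard character ratio bounds $|\chi^\lambda(g)|/f^\lambda\le (\max(\lambda_1,\lambda_1')/n)^{c\cdot\mathrm{supp}(g)}$. These should make such $\theta_\lambda$ negligible.
\item For $\lambda$ within $n^{3/4}$ of a row or column, use explicit Murnaghan--Nakayama/Frobenius formulas on the $1^a2^b$ classes. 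The error terms from this regime produce the $n^{3/4}\log n$ in the exponent.
\end{itemize}
The factor $e^{n/2}$ should arise as the ratio $(n/2)^{n/2}/(n/2)!$ between the crude count of $1^a2^b$ class sizes and $n!!$. I expect the difficulty to be in choosing the weights $w_j$ so that the negative eigenvalues near the even-part partitions cancel, rather than accepting the full $8^{n/2}$ loss from the polynomial method.
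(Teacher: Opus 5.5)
Your lower bound is correct, and your setup (a weighted Hoffman bound on $\Cay(S_n,O_n)$ with class-constant weights, target $\tau\le\frac{n!!}{n!}e^{n/2+Cn^{3/4}\log n}$) is the paper's framework. The upper bound, however, has a genuine gap.

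First, a concrete error. For even $n$ the classes $1^{2j+1}2^{(n-2j-1)/2}$ do not exist: a permutation with $b$ transpositions and $a$ fixed points has $a=n-2b$, which is even. Every class in $O_n$ moves an odd number $n-m_1$ of points, so it contains an odd cycle of length at least $3$. Your weight family is therefore empty, and the $p_1^ap_2^b$ / $2$-quotient computation does not apply.

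Second, and more seriously, the whole content of the upper bound (explicit weights, and a proof that $\theta_\lambda\ge-\tau$ for all $\lambda\ne(n)$) is left as the ``main obstacle''. The bulk strategy you sketch, absolute character-ratio bounds making $\theta_\lambda$ negligible, is the wrong kind of estimate. By Parseval and Cauchy--Schwarz (using $\sum_C w_C|C|=1$),
\[
\sum_\lambda\frac{(f^\lambda)^2}{n!}\theta_\lambda^2=\sum_C w_C^2|C|\ge\frac{1}{|U|},
\]
where $U$ is the union of the supporting classes. For nonnegative weights $|\theta_\lambda|\le 1$. The Plancherel mass outside your bulk regime is only $n^{O(n^{3/4})}/n!$, which is negligible here. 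For an involution-like support (one short odd cycle, the rest transpositions and fixed points) one has $|U|=(n/e)^{n/2}e^{O(\sqrt n)}$. Hence some bulk $\lambda$ has $|\theta_\lambda|\ge(e/n)^{n/4}e^{-O(\sqrt n)}$, while you need $\tau$ of order $(e^2/n)^{n/2}e^{o(n)}$. Two-sided bounds on such a support cannot beat roughly $n!\,(e/n)^{n/4}$, which is worse even than Cameron--Deza--Frankl.

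Even for the paper's weights, a partition with $\lambda_1=n/2$ has $\theta_\lambda$ positive and of order $n^{-n/4}e^{O(n)}$, far above $\tau$. So in the intermediate range one needs a \emph{sign} argument, not a smallness argument.

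The missing idea is a choice of weights for which such a sign argument is available. The paper spreads the weight over all of $O_n$ as a Gibbs measure, $\omega_\rho\propto a^{m_1(\rho)}/z_\rho$ with $a=\sqrt n$. It phrases this as averaging the dual LP constraints against $\pi_a$ and invoking strong duality, but these are simply explicit primal weights. Then
\[
\theta_\lambda=\frac{\Phi_\lambda(a)-\Phi_\lambda(-a)}{2Z_af^\lambda},
\]
where $\Phi_\lambda(1+t)$ is the image of $s_\lambda$ under $p_1\mapsto1+t$ and $p_k\mapsto1$ for $k\ge2$. This specialization sends $h_m\mapsto E_m(t)$, so Jacobi--Trudi gives $\Phi_\lambda(1+t)=\det[E_{\lambda_i-i+j}(t)]$. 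The argument then splits:
\begin{itemize}
\item For $\lambda_1\ge C_0n^{3/4}$, the $c_i/c_1$ estimates show that the leading term $c_1t^dE_{\lambda_1}(t)$ dominates at both $t=\sqrt n-1$ and $t=-(\sqrt n+1)$. This forces $\theta_\lambda\ge0$.
\item For $\lambda_1<C_0n^{3/4}$, nonnegativity of $\Phi_\lambda(a)$ and the Pieri bound $A_{\lambda,j}\le\binom jd f^{\overline\lambda}$ give $\theta_\lambda\ge-2e^2X^d/d!$, with $X=\sqrt n+1$ and $d\ge n-C_0n^{3/4}$. This is the source of both the $e^{n/2}$ and the $Cn^{3/4}\log n$ in the theorem.
\end{itemize}
These weights sit on classes of near-maximal size, giving $\sum_C w_C^2|C|=e^{n+O(\sqrt n)}/n!$, which is compatible with the required $\tau$. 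Your involution-like supports are not.
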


Consequently, the bound in \Cref{thm:main_result} improves their bound given in \Cref{eq:CDF-even-bound} by an exponential
factor of order $\left({8}/{e}\right)^{\,n/2-o(n)}$.

There is a closely related recent work of Lindzey \cite{lindzey2026eventown}, who studied an Eventown-type problem for permutations under a different notion of intersection. In his work, a family is called \emph{even-cycle-intersecting} if $\sigma\pi^{-1}$ contains an even cycle for every pair $\sigma,\pi$ in the family. Although this condition is related to parity phenomena in the symmetric group, it is different from the agreement-based condition considered here. In particular, our condition concerns the parity of the number of fixed points of $\sigma\pi^{-1}$, whereas Lindzey's condition concerns its cycle structure. 

For odd $n$, an even-intersecting family is $L$-intersecting with $L=\{0,2,4,\ldots, n-3\}$. Taking  $p=2$, in Cameron, Deza, Frankl result \cite{cameron1987sharp},  $L$ is covered by just one residue class modulo 2, hence $s=1$. Therefore, we have
$$M(n)\leq \sum_{\substack{\lambda\vdash n\\ n-\lambda_1\leq 1}}
(f^\lambda)^2= \left(f^{(n-1,1)}\right)^2+\left(f^{(n)}\right)^2=(n-1)^2+1.$$

For odd $n$, we  establish lower bounds for $M(n)$ by giving two explicit constructions of even-intersecting families. The first applies to every odd $n$, while the second gives a substantially larger family when $n=(q+1)^2+1$ for some odd prime power $q$.

\begin{theorem}\label{Thm:loose lower bound}
For every odd $n$,
$$
M(n)\geq
\begin{cases}
\displaystyle \frac{(n+1)^2}{4},
& \text{if } n\equiv 1 \pmod 4,\\[2mm]
\displaystyle \frac{(n+1)^2}{4}-1,
& \text{if } n\equiv 3 \pmod 4.
\end{cases}
$$
\end{theorem}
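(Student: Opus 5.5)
The plan is to give an explicit construction: write $n=2k-1$ with $k=(n+1)/2$ and index the family by pairs $(i,j)\in\mathbb Z_k\times\mathbb Z_k$, which gives $k^2=(n+1)^2/4$ permutations. I have not checked that any specific choice works, so what follows is the intended shape of the construction and the checks it must pass.

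\emph{Step 1: reduction to a common fixed point.} Suppose $\mathcal G\subseteq S_{2k}$ is a family in which every member fixes a common point $\infty$ and any two distinct members agree in an \emph{odd} number of positions. Deleting $\infty$ gives a family in $S_{2k-1}=S_n$ of the same size, and every pairwise agreement drops by exactly one, so it becomes even. It therefore suffices to build such a $\mathcal G$ of size $k^2$ (or $k^2-1$).

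\emph{Step 2: building $\mathcal G$ from two blocks.} Split $[2k]\setminus\{\infty\}$ as $X\sqcup Y$ with $|X|=k$, $|Y|=k-1$, and identify $X$ with $\mathbb Z_k$. Let $\sigma_{i,j}$ act on $X$ by the translation $x\mapsto x+i$, and on $Y\cup\{\infty\}$ by a permutation $\tau_{i,j}$ fixing $\infty$. Distinct translations of $\mathbb Z_k$ agree nowhere, and equal ones agree on all $k$ points. So the total agreement of $\sigma_{i,j}$ and $\sigma_{i',j'}$ is:
\begin{itemize}
\item $k+|\tau_{i,j}\cap\tau_{i,j'}|$ when $i=i'$;
\item $|\tau_{i,j}\cap\tau_{i',j'}|$ when $i\neq i'$.
\end{itemize}
Each of these must be odd. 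The requirement on the $\tau$'s on the $k$-point set $Y\cup\{\infty\}$ is then:
\begin{itemize}
\item pairs with $i\ne i'$ agree in an odd number of points;
\item pairs with $i=i'$, $j\neq j'$ agree in a number of points of parity opposite to $k$.
\end{itemize}
Since all $\tau$'s fix $\infty$, this is a parity condition on the agreements of $k^2$ permutations of the $(k-1)$-set $Y$.

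For the $\tau$'s I would use affine or dihedral maps on $Y\cup\{\infty\}\cong\mathbb Z_{k-1}\cup\{\infty\}$, with translations, reflections and the two values $i,j$ coupled so that the parity pattern comes out right. The relevant facts on $\mathbb Z_m$ are these: reflections $y\mapsto -y+c$ have $0$ or $2$ fixed points when $m$ is even. Also, translation-versus-reflection agreements are controlled by solving $2y=c'-c$. Together these let one tune the parities.

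\emph{Step 3: the two residue classes.} The case split in Theorem~\ref{Thm:loose lower bound} suggests the parity of $k-1=(n-1)/2$ matters. When $n\equiv1\pmod 4$, $k-1$ is even, and the dihedral parities should align exactly, giving all $k^2$ permutations. When $n\equiv3\pmod4$, $k-1$ is odd and one expects a single clash. I would handle this by discarding one offending member, for instance $(i,j)=(0,0)$, or a member whose agreement with the others has the wrong parity, yielding $k^2-1$.

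\emph{Main obstacle.} The real work is Step 2: choosing the $\tau_{i,j}$ so that the odd/even agreement pattern holds simultaneously for all pairs. I would first search small cases ($n=5,7,9$) by hand or computer to find the right coupling of translations and reflections, then verify the parity of fixed points of $\tau_{i,j}\tau_{i',j'}^{-1}$ in general by counting solutions of a linear congruence mod $k-1$. Step 1 and the final count are routine once that verification is done.
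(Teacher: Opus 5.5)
As written this is not yet a proof, since you leave the choice of the $\tau_{i,j}$ open. The real problem is worse: Step~2 cannot be completed for any permutations $\tau_{i,j}$, affine, dihedral or otherwise.

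Put $m=k-1=|Y|$, and let $a(\tau,\tau')$ be the number of points of $Y$ on which $\tau,\tau'$ agree. Your conditions read $a\equiv k\pmod 2$ within a group and $a\equiv 0\pmod 2$ across groups. Within a group the $\tau_{i,j}|_Y$ must also be distinct, because the $\sigma_{i,j}$ already coincide on $X\cup\{\infty\}$. So for $n=5$ a group would need three distinct permutations of a $2$-set.

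\emph{Case $k$ even ($n\equiv 3\pmod 4$).} Every pairwise agreement on $Y$ must be even. Equal $\tau$'s in different groups are excluded too, since they agree in $k-1$ points, an odd number. So the $\tau_{i,j}|_Y$ form an even-intersecting family in $S_{k-1}$, with $k-1$ odd, of size at least $k^2-1$. Already for $n=7$ that means $15$ permutations of a $3$-set. In general it contradicts the Cameron--Deza--Frankl bound $M(k-1)\le (k-2)^2+1$ quoted in the paper, and discarding one member cannot help.

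\emph{Case $k$ odd ($n\equiv 1\pmod 4$).} Let $v_{i,j}\in\mathbb{F}_2^{Y\times Y}$ be the permutation matrix of $\tau_{i,j}|_Y$. Then $\langle v_{i,j},v_{i',j'}\rangle\equiv a(\tau_{i,j},\tau_{i',j'})\pmod 2$, so your conditions force the mod-$2$ Gram matrix to be $\bigoplus_{i\in\mathbb{Z}_k}(J_k+I_k)$, where $J_k$ is the all-ones matrix. For odd $k$ the kernel of $J_k+I_k$ is spanned by the all-ones vector, so this Gram matrix has rank $k(k-1)$. But a Gram matrix has rank at most the dimension of the span of its vectors. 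Every permutation matrix of a $(k-1)$-set lies in the space of matrices whose row and column sums all equal one common constant. That space has dimension $(k-2)^2+1<k(k-1)$, a contradiction.

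So families that preserve $X$, act on it by translations and fix $\infty$ are far too rigid. No small-case search will find the coupling you hope for.

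The missing idea is to let the two cyclic pieces overlap instead of living on disjoint blocks. The paper takes odd $r,s$ with $r+s-1=n$ and the cycles $\alpha=(1,2,\ldots,r)$ and $\beta=(r,r+1,\ldots,n)$, which share only the point $r$. The family is $\{\beta^j\alpha^i:0\le i<r,\ 0\le j<s\}$. A direct check shows that two distinct members agree in $r-1$ points if they have the same $i$, in $s-1$ points if they have the same $j$, and in no point otherwise; all three numbers are even.

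When $n\equiv 1\pmod 4$, taking $r=s=(n+1)/2$ gives $(n+1)^2/4$. When $n\equiv 3\pmod 4$ the number $(n+1)/2$ is even, so one takes $r=(n-1)/2$ and $s=(n+3)/2$, getting $rs=(n+1)^2/4-1$. Thus the case split comes from needing both cycle lengths odd, not from a single parity clash that forces you to discard a member.
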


\begin{theorem}\label{Thm:tight lower bound}
If $n=(q+1)^2+1$ for some odd prime power $q$, then
$$
M(n)\geq (n-\sqrt{n-1})^2=n^2-O(n^{3/2}).
$$
\end{theorem}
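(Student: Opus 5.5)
The plan is to write down an explicit family of size $(q^2+q+1)^2$ on $n=q^2+2q+2$ points, built from a cyclic difference set, and to verify the parity condition by a case analysis. Note that $(n-\sqrt{n-1})^2=(q^2+2q+2-(q+1))^2=(q^2+q+1)^2$.

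\textbf{Setup.} Put $N=q^2+q+1$. By Singer's theorem there is a planar difference set
$$D=\{d_0,\dots,d_q\}\subseteq \mathbb Z_N .$$
Its translates $a+D$, $a\in\mathbb Z_N$, are the lines of $PG(2,q)$. Hence $|a+D|=q+1$, and two distinct translates meet in exactly one point. Take the ground set $\mathbb Z_N\sqcup E$, where $E=\{e_0,\dots,e_q\}$, so it has $n$ points. For $(a,b)\in\mathbb Z_N^2$ define $\sigma_{a,b}$ by
$$
\sigma_{a,b}(x)=x+b \quad (x\in\mathbb Z_N\setminus(a+D)),\qquad
\sigma_{a,b}(a+d_i)=e_i,\qquad
\sigma_{a,b}(e_i)=a+b+d_i .
$$
This is a bijection. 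The translation carries $\mathbb Z_N\setminus(a+D)$ onto $\mathbb Z_N\setminus(a+b+D)$, the line $a+D$ goes onto $E$, and $E$ goes onto the missing line $a+b+D$. Distinct pairs $(a,b)$ give distinct permutations, since $a$ is recovered from $\sigma^{-1}(E)$ and then $b$ from $\sigma(e_0)$. So the family has size $N^2$.

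\textbf{Agreement sets by region.} For $(a,b)\neq(a',b')$, split the agreements of $\sigma_{a,b}$ and $\sigma_{a',b'}$ into three regions.
\begin{enumerate}[(i)]
\item Points $x\notin (a+D)\cup(a'+D)$. Here they agree iff $b=b'$.
\item Points of $E$. Here $e_i$ is a common agreement iff $a+b=a'+b'$, and then all $q+1$ points of $E$ agree.
\item Points of $(a+D)\cup(a'+D)$. If $x$ lies in exactly one of the two lines, one image is in $E$ and the other is in $\mathbb Z_N$, so there is no agreement. If $x=a+d_i=a'+d_j$ lies in both, the images are $e_i$ and $e_j$. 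When $a\neq a'$ we must have $i\neq j$, so no agreement. When $a=a'$, all $q+1$ points of the line agree.
\end{enumerate}

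\textbf{Case analysis.} The possible patterns are these.
\begin{itemize}
\item $b=b'$, $a\neq a'$. Then $a+b\ne a'+b'$, and the count is $N-|(a+D)\cup(a'+D)|=N-(2q+1)=q^2-q$.
\item $a=a'$, $b\neq b'$. The count is $q+1$ from region (iii).
\item $a+b=a'+b'$ with $a\ne a'$. The count is $q+1$ from $E$.
\item Otherwise the count is $0$.
\end{itemize}
Since $q$ is odd, each of $q^2-q$, $q+1$ and $0$ is even, so the family is even-intersecting. This proves $M(n)\ge (q^2+q+1)^2$.

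\textbf{Main obstacle.} The delicate step is region (iii): one must check that two distinct lines never produce an agreement at their common point. This uses the fixed indexing of $D$ together with the uniqueness of representation of differences, i.e.\ $a+d_i=a'+d_i$ forces $a=a'$. The other essential ingredient is that two lines meet in exactly one point. This is what makes $q^2-q$ even, and it is exactly where the projective-plane structure and the oddness of $q$ enter.
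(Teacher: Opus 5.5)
Your argument is correct and is essentially the paper's Construction~2 with a Singer difference set. With $\tau$ the shift $x\mapsto x+1$ on $\mathbb Z_N$ (fixing $E$), one checks $\sigma_{a,b}=\tau^{a+b}\sigma_{0,0}\tau^{-a}$. So after right-multiplying by $\sigma_{0,0}$ and inverting, which preserves all agreement counts, your family is exactly the paper's $\{\beta^j\alpha^i\}$ with $\alpha=\tau$ and $\beta=\sigma_{0,0}\tau\sigma_{0,0}$, and your counts $q+1$, $q+1$, $q^2-q$, $0$ are the paper's.

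One small correction to your closing remark: $q^2-q$ is even for every $q$, since $N-2(q+1)+|(a+D)\cap(a'+D)|$ is even because two lines meet in an odd number of points. The oddness of $q$ is needed only for the counts equal to $q+1$.
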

\subsection{Structure of the paper}
The remainder of the paper is organized as follows. In \Cref{sec:preliminaries}, we collect the preliminary results from representation theory, symmetric functions, and spectral graph theory that will be used throughout the paper. In \Cref{sec:optimization_bound}, we develop the technical lemmas and the character-theoretic optimization bound needed for the main result. \Cref{sec:Even_intersecting_family} is devoted to even-intersecting families of permutations: in \Cref{subsec:Even_intersecting_family_even_n}, we treat the case of even $n$ and prove Theorem 1 using a weighted Cayley graph and Hoffman's bound, while in \Cref{subsec:Even_intersecting_family_odd_n}, we consider odd $n$ and give two constructions yielding lower bounds for $M(n)$.
\section{Preliminaries} 
\label{sec:preliminaries}
 In this section, we present some results from the representation theory of symmetric groups and the ring of symmetric functions that will be used in this paper. We state these results without proof; interested readers may consult standard references, such as \cite{macdonald1998symmetric,sagan2001symmetric,Stanley_2023}, for further details.

\subsection{Representation Theory of Symmetric Groups}

A \emph{partition} of $n \in \mathbb{N}$, denoted $\lambda \vdash n$, is a sequence of positive integers $\lambda = (\lambda_1, \lambda_2, \dots, \lambda_r)$ such that $\lambda_1 \ge \lambda_2 \ge \dots \ge \lambda_r > 0$ and $\sum_{i=1}^r \lambda_i = n$. Let $m_i=m_i(\lambda)$ denote the number of parts equal to $i$ in $\lambda$, then we can denote $\lambda$ as $\lambda=(1^{m_1}2^{m_2}\ldots)$. The conjugacy classes of $S_n$ are uniquely indexed by partitions of $n$ corresponding to cycle types. For a symmetric group the set of distinct irreducible representation is in one-to-one correspondence with the set of partitions of $n$. We denote the irreducible representation of $S_n$ corresponding to the  partition $\lambda \vdash n$ by $S^{\lambda}$ (often called specht module). The character of $S^{\lambda}$ is denoted by $\chi^{\lambda}$. The character value on the identity element gives the dimension of the representation, denoted by $f^{\lambda}$. 

There is an efficient way to calculate the dimension of an irreducible representation of $S_n$ using the hook length formula. A Young diagram $Y_\lambda$ associated with a partition
$\lambda=(\lambda_1,\lambda_2,\ldots,\lambda_r)\vdash n$ is a left-justified array of cells with $\lambda_i$ cells in the $i$-th row. For a cell $u=(i,j)\in Y_\lambda$, define its hook as the set
$$H(u)=\{(i,j')\in Y_\lambda:j'\geq j\}
\cup
\{(i',j)\in Y_\lambda:i'>i\}.$$
In other words, the hook of $u$ consists of the cell $u$, the cells to its right in the same row, and the cells below it in the same column. The hook length $h(u)$ is the cardinality of $H(u)$.

\begin{theorem}[Hook Length Formula \cite{frame1954hook}]
Let $\lambda\vdash n$ and let $Y_\lambda$ be the Young diagram corresponding to $\lambda$. Then the dimension $f^\lambda$ of the irreducible representation $S^{\lambda}$ of $S_n$ is given by
$$f^\lambda
=
\frac{n!}{\displaystyle\prod_{u\in Y_\lambda} h(u)},$$
where $h(u)$ denotes the hook length of the cell $u$.
\end{theorem}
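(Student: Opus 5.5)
The statement is the classical Hook Length Formula of Frame, Robinson and Thrall, which the paper quotes without proof. I would prove it probabilistically, via the hook walk of Greene, Nijenhuis and Wilf. Write $F(\lambda)=n!/\prod_{u\in Y_\lambda}h(u)$.

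The first step is the branching recursion for $f^\lambda$. The number $f^\lambda$ equals the number of standard Young tableaux of shape $\lambda$, so by removing the cell containing $n$ we get
\[
f^\lambda=\sum_{c\ \text{corner of } Y_\lambda} f^{\lambda\setminus c},
\qquad f^{\varnothing}=1 .
\]
Since $F(\varnothing)=1$, it then suffices by induction on $n$ to show that $F$ satisfies the same recursion. Equivalently, I must show
\[
\sum_{c} \frac{F(\lambda\setminus c)}{F(\lambda)}=1 .
\]

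The second step sets up the hook walk. Choose a cell of $Y_\lambda$ uniformly at random, with probability $1/n$. From the current cell $u$, move to a uniformly random cell of $H(u)\setminus\{u\}$, and stop when a corner is reached. The walk stops at some corner with probability $1$, so the corner probabilities sum to $1$. It then remains to show that the probability of ending at the corner $c=(\alpha,\beta)$ equals $F(\lambda\setminus c)/F(\lambda)$.

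Removing $c$ changes only the hook lengths in row $\alpha$ and column $\beta$, each of which drops by $1$. Hence
\[
\frac{F(\lambda\setminus c)}{F(\lambda)}=\frac1n\prod_{i<\alpha}\Bigl(1+\frac{1}{h_{i\beta}-1}\Bigr)\prod_{j<\beta}\Bigl(1+\frac{1}{h_{\alpha j}-1}\Bigr).
\]
Expanding these products gives a sum over pairs of subsets $I\subseteq\{1,\dots,\alpha-1\}$ and $J\subseteq\{1,\dots,\beta-1\}$, with each term equal to $\frac1n\prod_{i\in I}\frac{1}{h_{i\beta}-1}\prod_{j\in J}\frac{1}{h_{\alpha j}-1}$.

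The main obstacle is the third step: proving that, for a walk starting at $(\min(I\cup\{\alpha\}),\min(J\cup\{\beta\}))$, the probability that its set of visited rows is exactly $I\cup\{\alpha\}$ and its set of visited columns is exactly $J\cup\{\beta\}$ equals the corresponding product term. I would prove this by induction on $|I|+|J|$. A path from $(a,b)$ first jumps either down to row $a'$ or right to column $b'$, each with probability $1/(h_{ab}-1)$. The key identity is
\[
h_{ab}-1=(h_{a\beta}-1)+(h_{\alpha b}-1),
\]
which holds because $c$ is a corner: the arm of $(a,b)$ splits into the part left of $\beta$ and the arm of $(a,\beta)$, and similarly for the leg. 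Using this identity, the first-step decomposition together with the induction hypothesis yields exactly the desired product. Summing over the start cell then reproduces the expansion above.

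Once this is done, the induction on $n$ closes and gives $f^\lambda=F(\lambda)$.
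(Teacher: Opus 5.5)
The paper gives no proof of this statement; it only quotes it in the preliminaries with a citation to Frame, Robinson and Thrall. Your proposal is the Greene--Nijenhuis--Wilf hook-walk proof, and it is correct as outlined.

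\begin{itemize}
\item The ratio $F(\lambda\setminus c)/F(\lambda)$ is right, because only the hooks in row $\alpha$ and column $\beta$ shrink and $h(c)=1$.
\item The splitting $h_{ab}-1=(h_{a\beta}-1)+(h_{\alpha b}-1)$ for $a\le\alpha$, $b\le\beta$ follows from $\lambda_\alpha=\beta$ and $\lambda'_\beta=\alpha$. It also covers the boundary cases $a=\alpha$ or $b=\beta$ uniformly, since $h_{\alpha\beta}-1=0$.
\item Your starting recursion needs no tableaux within the paper's framework. It is the case $j=n-1$ of the paper's formula for $A_{\lambda,j}$, because $p_1^{n-1}h_1=p_1^n$ and $\langle p_1^n,s_\lambda\rangle=f^\lambda$.
\end{itemize}

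A route closer to the paper's own machinery, and to the original proof, is determinantal.
\begin{itemize}
\item Apply the specialization $p_1\mapsto t$, $p_k\mapsto 0$ for $k\ge 2$. Under it $h_m\mapsto t^m/m!$ and $s_\lambda\mapsto f^\lambda t^n/n!$.
\item Applied to Jacobi--Trudi, this gives $f^\lambda=n!\det[1/(\lambda_i-i+j)!]$.
\item The generalized Vandermonde lemma turns this into $n!\prod_{i<j}(l_i-l_j)/\prod_i l_i!$ with $l_i=\lambda_i+r-i$.
\end{itemize}
This is the same computation the paper carries out for its coefficients $c_i$. In particular, its identification $c_1=f^{\overline{\lambda}}/d!$ is exactly this formula applied to $\overline{\lambda}$.

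That route still needs a separate combinatorial fact: the hook lengths in row $i$ are $\{1,\dots,l_i\}\setminus\{l_i-l_j:j>i\}$. Your hook walk avoids determinants and this identity entirely. The price is the probabilistic induction on $|I|+|J|$, in exchange for a self-contained argument using only the branching rule.
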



\subsection{The Ring of Symmetric Functions}

Let $\mathbf{x}=\{x_1,x_2,\ldots\}$ be an infinite set of variables. For a
partition $\lambda=(\lambda_1,\lambda_2,\ldots,\lambda_r)$, the
\emph{monomial symmetric function} corresponding to $\lambda$ is defined by
$$
m_\lambda=m_\lambda(\mathbf{x})
:=
\sum_{\alpha}x_1^{\alpha_1}x_2^{\alpha_2}\cdots,
$$ where the sum is over all distinct rearrangements $\alpha$ of
$(\lambda_1,\lambda_2,\ldots,\lambda_r,0,0,\ldots)$.

The \emph{ring of symmetric functions} over $\mathbb{Q}$ is the space spanned by all $m_\lambda$, that is, 
$$
\Lambda
=
\operatorname{span}_{\mathbb{Q}}
\{m_\lambda:\lambda \text{ is a partition}\}.
$$
It is a graded ring with decomposition
$$
\Lambda
=
\bigoplus_{n\geq 0}\Lambda^n,
$$
where $\Lambda^n=\operatorname{span}_{\mathbb{Q}}
\{m_\lambda:\lambda\vdash n\}.
$ In particular, $\{m_\lambda:\lambda\vdash n\}$ is a basis of
$\Lambda^n$.

Besides the monomial symmetric functions, there are several other standard
families of symmetric functions which give natural bases of $\Lambda^n$.
We first introduce the power-sum and complete homogeneous symmetric
functions.

For a positive integer $m$, the \emph{power-sum symmetric function} of
degree $m$ is defined by
$$
p_m=p_m(\mathbf{x})
:=
\sum_{i\geq 1}x_i^m.
$$
The \emph{complete homogeneous symmetric function} of degree $m$ is defined
by
$$
h_m=h_m(\mathbf{x})
:=
\sum_{i_1\leq i_2\leq\cdots\leq i_m}
x_{i_1}x_{i_2}\cdots x_{i_m},
$$
with the convention $h_0=1$. For a partition
$\lambda=(\lambda_1,\lambda_2,\ldots,\lambda_r)$, define
$$
p_\lambda
:=
p_{\lambda_1}p_{\lambda_2}\cdots p_{\lambda_r},
\qquad
h_\lambda
:=
h_{\lambda_1}h_{\lambda_2}\cdots h_{\lambda_r}.
$$
Both $\{p_\lambda:\lambda\vdash n\}$ and $\{h_\lambda:\lambda\vdash n\}
$ form bases of $\Lambda^n$. The power-sum and complete homogeneous symmetric functions are related by the generating function
\begin{equation*}
    \sum_{m\geq 0}h_mu^m
=
\exp\left(
\sum_{k\geq 1}p_k\frac{u^k}{k}
\right).
\end{equation*}

Another fundamental basis of $\Lambda$ is given by the \emph{Schur symmetric
functions}. Let $\lambda$ be a partition. A \emph{semistandard Young tableau}
of shape $\lambda$ is a filling of the Young diagram of $\lambda$ with
positive integers such that the entries are weakly increasing along each
row and strictly increasing down each column. If $T$ is a semistandard
Young tableau, let $m_i(T)$ denote the number of entries equal to $i$, and
write
$$
\mathbf{x}^T
=
x_1^{m_1(T)}x_2^{m_2(T)}\cdots.
$$
The \emph{Schur symmetric function} corresponding to $\lambda$ is defined by
$$
s_\lambda=s_\lambda(\mathbf{x})
:=
\sum_T \mathbf{x}^T,
$$
where the sum is over all semistandard Young tableaux $T$ of shape
$\lambda$. The set $
\{s_\lambda:\lambda\vdash n\}
$ forms a basis of $\Lambda^n$. For a partition $\rho=(1^{m_1}2^{m_2}\cdots)$, define
$$
z_\rho=\prod_{i\geq 1}i^{m_i}m_i!.
$$
The \emph{Hall inner product} on $\Lambda$ is defined on the power-sum basis by
$$
\langle p_\lambda,p_\mu\rangle
=
\delta_{\lambda,\mu}z_\lambda.
$$
With respect to this inner product, the Schur functions form an
orthonormal basis; that is,
$$
\langle s_\lambda,s_\mu\rangle
=
\delta_{\lambda,\mu}.
$$

We now recall the Frobenius characteristic correspondence between
symmetric functions and characters of symmetric groups. For each $n\geq 0$, let $R^n$ denote the space of
class functions on $S_n$. 
The \emph{Frobenius characteristic map} $
\operatorname{ch}^n:R^n\longrightarrow\Lambda^n$ is defined by
$$
\operatorname{ch}^n(f)
=
\sum_{\rho\vdash n}
\frac{f(\rho)}{z_\rho}p_\rho,
$$
where $f(\rho)$ denotes the value of $f$ on the conjugacy class of
cycle type $\rho$.

\begin{lemma}[Frobenius characteristic correspondence]
\label{lem:Frobenius_characteristic}
For every
$\lambda\vdash n$,
$$
\operatorname{ch}^n(\chi^\lambda)=s_\lambda,
$$
where $\chi^\lambda$ denotes the irreducible character of $S_n$
indexed by $\lambda$.
\end{lemma}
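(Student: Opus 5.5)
The approach is the standard one: establish the Frobenius characteristic correspondence $\operatorname{ch}^n(\chi^\lambda)=s_\lambda$ by showing that $\operatorname{ch}=\bigoplus_n\operatorname{ch}^n$ is an isometric ring isomorphism. We then identify the images of the permutation characters of Young subgroups, and finally the image of the irreducible characters.

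\textbf{Step 1: isometry.} Equip $R^n$ with the inner product $\langle f,g\rangle=\frac{1}{n!}\sum_{\sigma\in S_n}f(\sigma)\overline{g(\sigma)}$. Since the class of cycle type $\rho$ has size $n!/z_\rho$, we get
\[
\langle f,g\rangle=\sum_{\rho\vdash n}z_\rho^{-1}f(\rho)\overline{g(\rho)}.
\]
On the other hand, $\langle p_\rho,p_\mu\rangle=\delta_{\rho\mu}z_\rho$, so
\[
\langle\operatorname{ch}^n f,\operatorname{ch}^n g\rangle=\sum_\rho z_\rho^{-2}f(\rho)\overline{g(\rho)}\,z_\rho,
\]
which is the same expression. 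Hence $\operatorname{ch}^n$ is an isometry.

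\textbf{Step 2: products and trivial characters.} Let $\circ$ denote the induction product, $f\circ g=\Ind_{S_a\times S_b}^{S_{a+b}}(f\times g)$. Using the induced-character formula, one checks that $\operatorname{ch}(f\circ g)=\operatorname{ch}(f)\operatorname{ch}(g)$. Next, the trivial character $1_{S_m}$ maps to $\sum_{\rho\vdash m}p_\rho/z_\rho$. Expanding $\exp\bigl(\sum_k p_ku^k/k\bigr)$ and taking the $u^m$ coefficient gives exactly this sum, so by the generating function stated above it equals $h_m$. Consequently the permutation character $\eta^\mu=\Ind_{S_\mu}^{S_n}1$ maps to $h_\mu$.

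\textbf{Step 3: identifying $s_\lambda$.} Two combinatorial facts are needed.
\begin{itemize}
\item Young's rule for characters: $\eta^\mu=\sum_\lambda K_{\lambda\mu}\chi^\lambda$, where the $K_{\lambda\mu}$ are the Kostka numbers.
\item The symmetric-function identity $h_\mu=\sum_\lambda K_{\lambda\mu}s_\lambda$.
\end{itemize}
The second follows from Cauchy's identity $\sum_\lambda s_\lambda(x)s_\lambda(y)=\prod_{i,j}(1-x_iy_j)^{-1}=\sum_\mu h_\mu(x)m_\mu(y)$, together with $s_\lambda=\sum_\mu K_{\lambda\mu}m_\mu$, which is immediate from the tableau definition. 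The matrix $(K_{\lambda\mu})$ is unitriangular in dominance order, so it is invertible. Comparing the two expansions therefore gives $\operatorname{ch}(\chi^\lambda)=s_\lambda$.

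Alternatively, one can avoid Young's rule. Use Step 1 and the orthonormality of the $\chi^\lambda$ to see that the images $\operatorname{ch}(\chi^\lambda)$ are orthonormal. By triangularity they lie in $s_\lambda+\operatorname{span}\{s_\nu:\nu\rhd\lambda\}$, and an induction on dominance order then pins each one down, up to a sign that is fixed by positivity of $\langle \operatorname{ch}\chi^\lambda,h_\lambda\rangle$.

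The main obstacle is Step 3: proving Cauchy's identity, or the RSK bijection behind it, and Young's rule. I would take the triangular-plus-orthonormal route. It needs only that $\langle\eta^\lambda,\chi^\lambda\rangle=1$ and that $\langle\eta^\mu,\chi^\lambda\rangle=0$ unless $\lambda\unrhd\mu$, which are standard facts from Specht module theory.
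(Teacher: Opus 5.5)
The paper does not prove this lemma. It is one of the preliminaries ``stated without proof'', with references to Macdonald, Sagan and Stanley, so the comparison has to be with those sources.

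Your first route is correct and is the standard Specht-module argument, as in Sagan's book. It combines multiplicativity of $\operatorname{ch}$, $\operatorname{ch}(\eta^\mu)=h_\mu$, Young's rule, $h_\mu=\sum_\lambda K_{\lambda\mu}s_\lambda$, and invertibility of the Kostka matrix. Two remarks on it:
\begin{enumerate}
\item It never uses the isometry of Step~1.
\item The expansion of $h_\mu$ follows by iterating Pieri's rule, since a chain of horizontal strips is a semistandard tableau of content $\mu$. So Cauchy's identity is not needed here either.
\end{enumerate}
One caution: invoke a version of Young's rule proved directly from Specht modules (semistandard homomorphisms, as in James or Sagan). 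A version deduced from the Frobenius character formula would be circular, since that formula is the present statement in disguise.

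Macdonald and Stanley go the other way. They define $\chi^\lambda:=\operatorname{ch}^{-1}(s_\lambda)$, use Jacobi--Trudi to see that it is a $\mathbb{Z}$-combination of the $\eta^\mu$, use the isometry to get norm $1$, and use $\chi^\lambda(1)>0$ to get irreducibility. That is shorter, but it only shows that the $\operatorname{ch}^{-1}(s_\lambda)$ are the irreducible characters. Identifying them with the characters of the Specht modules $S^\lambda$, which is how the paper indexes $\chi^\lambda$, needs a comparison with permutation modules such as yours.

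The route you say you would take does not need \emph{only} the two Specht-module facts. It uses two further inputs.
\begin{enumerate}
\item To place $\operatorname{ch}(\chi^\lambda)$ in $s_\lambda+\operatorname{span}\{s_\nu:\nu\rhd\lambda\}$, you must combine the representation-side triangularity with $h_\mu\in s_\mu+\operatorname{span}\{s_\nu:\nu\rhd\mu\}$. That comes from Pieri or Jacobi--Trudi.
\item The induction step, $c_\nu=\langle\operatorname{ch}\chi^\lambda,s_\nu\rangle=\langle\operatorname{ch}\chi^\lambda,\operatorname{ch}\chi^\nu\rangle=0$ for $\nu\rhd\lambda$, uses orthonormality of the Schur functions under the Hall inner product. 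That fact is equivalent to Cauchy's identity.
\end{enumerate}
So this route does not avoid Cauchy; it trades Young's rule for it. The paper takes orthonormality of the $s_\lambda$, Pieri's rule and Jacobi--Trudi as known, so your argument is complete relative to those inputs. The sign discussion is unnecessary, because the coefficient of $s_\lambda$ is already forced to be $1$.
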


For $\lambda,\rho\vdash n$, the Frobenius characteristic correspondence gives $\langle p_\rho,s_\lambda\rangle=\chi^\lambda(\rho).$ Equivalently,
\begin{equation}\label{eq:p_1_j}
p_\rho
=
\sum_{\lambda\vdash n}
\chi^\lambda(\rho)s_\lambda.
\end{equation}

Now let $
R=\bigoplus_{n\geq 0}R^n.
$ The maps $\operatorname{ch}^n$ combine to give a graded isomorphism
$$
\operatorname{ch}
=
\bigoplus_{n\geq 0}\operatorname{ch}^n
:
R\longrightarrow\Lambda.
$$
For $f\in R^m$ and $g\in R^n$, define
$$
f\circ g
=
\Ind_{S_m\times S_n}^{S_{m+n}}
(f\otimes g).
$$
With this product, $R$ is a graded algebra, and $\operatorname{ch}(f\circ g)=\operatorname{ch}(f)\operatorname{ch}(g).$ Thus, $\operatorname{ch}:R\longrightarrow\Lambda$ is an isomorphism
of graded algebras.


We next recall Pieri's rule, which describes the product of a Schur
function with a complete homogeneous symmetric function. For partitions
$\mu\subseteq\lambda$, the skew diagram $\lambda/\mu$ is called a
\emph{horizontal $r$-strip} if it consists of $r$ boxes, with no two
boxes in the same column.

\begin{lemma}[Pieri's Rule]
\label{lem:Pieri's_Formula}
Let $\mu$ be a partition and let $r$ be a positive integer. Then
$$
s_\mu h_r
=
\sum_{\lambda}s_\lambda,
$$
where the sum is over all partitions $\lambda$ such that
$\lambda/\mu$ is a horizontal $r$-strip.
\end{lemma}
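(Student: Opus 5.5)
Pieri's rule is a standard result that the paper quotes without proof, so I will argue it from the definitions already given. The cleanest route uses the combinatorial definition of $s_\lambda$ via semistandard tableaux together with $h_r=s_{(r)}$. Indeed, a semistandard tableau of the one-row shape $(r)$ is just a weakly increasing word $i_1\le\cdots\le i_r$, which gives $h_r=s_{(r)}$. The product $s_\mu h_r$ is then the generating function of pairs $(T,w)$, where $T$ is an SSYT of shape $\mu$ and $w$ is a weakly increasing word of length $r$, weighted by $\mathbf{x}^T\mathbf{x}^w$.

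The plan is to give a weight-preserving bijection between such pairs $(T,w)$ and SSYT $U$ of shape $\lambda$, where $\lambda$ ranges over the shapes with $\lambda/\mu$ a horizontal $r$-strip. For this I use Schensted row insertion: insert the letters $w_1\le w_2\le\cdots\le w_r$ of $w$ into $T$ successively. Row insertion preserves semistandardness and the content, so $\mathbf{x}^U=\mathbf{x}^T\mathbf{x}^w$ holds automatically.

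The key step is the \emph{row bumping lemma}. Suppose $x\le x'$ are inserted consecutively. Then the bumping path of $x'$ lies strictly to the right of that of $x$, and the new box created by $x'$ lies weakly above and strictly to the right of the box created by $x$. I would prove this by induction down the rows. In each row $x'$ bumps the leftmost entry strictly greater than $x'$. Since $x\le x'$ and $x$ has just been placed, this entry lies strictly to the right of where $x$ landed, and the element bumped by $x'$ is $\ge$ the element bumped by $x$, so the comparison propagates to the next row. It follows that the $r$ new boxes lie in distinct columns, so $\lambda/\mu$ is a horizontal $r$-strip.

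For bijectivity I use reverse insertion. Given $U$ of shape $\lambda$ with $\lambda/\mu$ a horizontal strip, remove the boxes of $\lambda/\mu$ from right to left, reverse-bumping each one. By the converse direction of the same lemma, the ejected letters come out in weakly decreasing order. Read backwards, they form a weakly increasing word $w$, and the remaining tableau is an SSYT $T$ of shape $\mu$. The two procedures are inverse to each other because each elementary insertion step is invertible once the new box is known.

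The main obstacle is stating the bumping lemma carefully enough to handle both directions. In particular, the converse needs care: removing the strip boxes in right-to-left order must produce weakly decreasing letters, and the intermediate shapes must stay valid. I would state the lemma once as an ``if and only if'': for consecutive insertions, $x\le x'$ holds exactly when the new box of $x'$ is strictly right of and weakly above that of $x$. Applying it in both directions then gives the bijection, and summing weights gives $s_\mu h_r=\sum_\lambda s_\lambda$.
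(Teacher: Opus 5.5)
The paper gives no proof of this lemma: Pieri's rule is quoted in the preliminaries as a standard fact, with a pointer to standard references, so there is no internal argument to compare yours with. Your proof is the standard bijective one, via Schensted row insertion and the row bumping lemma, and it is correct. It uses only the tableau definition of $s_\lambda$ and the identity $h_r=s_{(r)}$, both of which the paper sets up. The shorter algebraic proofs, via alternants or via $\langle s_\mu h_r,s_\lambda\rangle=\langle h_r,s_{\lambda/\mu}\rangle$ (the coefficient of $x_1^r$ in a skew Schur function), need machinery the paper never introduces.

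When you write it out, make three points explicit.

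\emph{Termination in the forward induction.} Suppose the letter carried down by $x$ is appended to the end of some row, and the route of $x'$ reaches that row. Then the letter carried by $x'$ is at least as large, so it is also appended to the end of that same row. This is what prevents the route of $x'$ from extending below that of $x$, and hence makes the new box of $x'$ weakly above that of $x$.

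\emph{Legality of reverse bumping.} The rightmost remaining box of a horizontal strip $\lambda/\mu$ is always a corner of the current shape. It ends its row $i$, and the interlacing inequality $\mu_i\ge\lambda_{i+1}$ rules out a box beneath it. So every reverse bump is legal, and all intermediate shapes are partitions whose difference with $\mu$ is still a horizontal strip.

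\emph{The ``only if'' half of your bumping lemma.} This needs the companion statement that $x>x'$ forces the new box of $x'$ to lie strictly below and weakly left of that of $x$. The proof is the same row-by-row induction, now with the bumped letters strictly decreasing. The two outcomes are mutually exclusive, so this yields exactly the equivalence you need to conclude that the ejected letters come out weakly decreasing.
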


Another useful relation between the Schur and complete homogeneous
symmetric functions is given by the Jacobi--Trudi identity.

\begin{lemma}[Jacobi--Trudi Identity]
\label{lem:Jacobi_Trudi}
For a partition $\lambda=(\lambda_1,\lambda_2,\ldots,\lambda_\ell)$,
$$
s_\lambda
=
\det\left(h_{\lambda_i-i+j}\right)_{1\leq i,j\leq\ell},
$$
where $h_0=1$ and $h_r=0$ for $r<0$.
\end{lemma}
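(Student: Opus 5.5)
We prove the identity combinatorially, with the Lindström--Gessel--Viennot (LGV) lemma on lattice paths. The plan has three steps: interpret each entry $h_{\lambda_i-i+j}$ as a generating function for weighted lattice paths; cancel all intersecting path families in the expansion of the determinant with a sign-reversing involution; and identify the surviving non-intersecting families with semistandard Young tableaux of shape $\lambda$. Both sides are formal power series in infinitely many variables. It therefore suffices to prove the identity in finitely many variables $x_1,\dots,x_N$ for every $N$, which keeps all path families finite.

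\textbf{Step 1 (paths).} Work in $\mathbb{Z}^2$ with unit steps east and north. Give an east step at height $k$ (where $1\le k\le N$) the weight $x_k$, and give north steps weight $1$. Put sources $A_j=(-j,1)$ and sinks $B_i=(\lambda_i-i,N)$ for $1\le i,j\le \ell$. A path from $A_j$ to $B_i$ makes exactly $\lambda_i-i+j$ east steps. Its weight is the monomial $x_{k_1}\cdots x_{k_m}$ with $k_1\le\cdots\le k_m$ the heights of those steps, and each such weakly increasing sequence arises from exactly one path. Hence the total weight of paths $A_j\to B_i$ is $h_{\lambda_i-i+j}(x_1,\dots,x_N)$. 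This also covers the conventions: the value is $1$ when $\lambda_i-i+j=0$, and $0$ when there is no path, which is the case $\lambda_i-i+j<0$.

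\textbf{Step 2 (cancellation).} Expand $\det(h_{\lambda_i-i+j})=\sum_{w\in S_\ell}\operatorname{sgn}(w)\prod_i h_{\lambda_i-i+w(i)}$. This is a signed sum over families $(w;P_1,\dots,P_\ell)$ in which $P_i$ runs from $A_{w(i)}$ to $B_i$. On families where some two paths meet, define an involution as follows. Take the smallest $i$ such that $P_i$ meets another path, the first lattice point $v$ on $P_i$ shared with another path, and the smallest $i'$ with $P_{i'}$ through $v$. Swap the tails of $P_i$ and $P_{i'}$ after $v$. This preserves the multiset of steps, hence the weight, and composes $w$ with a transposition, hence flips the sign. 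The same $i$, $v$, $i'$ are recovered after the swap, so the map is an involution and all these terms cancel. Now consider a non-intersecting family. The sources satisfy $A_1,\dots,A_\ell$ decreasing in $x$-coordinate along a horizontal line, and the sinks $B_1,\dots,B_\ell$ strictly decrease in $x$-coordinate since $\lambda_i-i>\lambda_{i+1}-(i+1)$. By planarity, disjoint paths cannot cross, so necessarily $w=\mathrm{id}$. Thus the determinant equals the weighted count of non-intersecting families with $P_i:A_i\to B_i$.

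\textbf{Step 3 (tableaux).} Map a non-intersecting family to the filling $T$ of $Y_\lambda$ whose row $i$ lists the heights of the $\lambda_i$ east steps of $P_i$ in order. Rows are weakly increasing by construction and $\mathbf{x}^T$ equals the weight of the family. The main obstacle is to check that non-intersection is \emph{equivalent} to strict increase down columns. Shift path $i$ horizontally by $+i$, so that it runs from $x=0$ to $x=\lambda_i$ and its $c$-th east step goes from $x=c-1$ to $x=c$. Then $P_i$ and $P_{i+1}$ are disjoint exactly when, for each $c\le\lambda_{i+1}$, the $c$-th east step of $P_{i+1}$ lies strictly higher than the $c$-th east step of $P_i$, that is, when $T(i,c)<T(i+1,c)$. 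This has to be checked carefully at the vertical segments, where a tie $T(i,c)=T(i+1,c)$ forces a common lattice point. Non-intersection of non-adjacent paths then follows from the adjacent case by planarity. Since the map is clearly invertible, the non-intersecting families are in weight-preserving bijection with the semistandard Young tableaux of shape $\lambda$ with entries at most $N$. Their generating function is $s_\lambda(x_1,\dots,x_N)$, which proves \Cref{lem:Jacobi_Trudi}.
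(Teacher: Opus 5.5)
The paper states this identity without proof, citing standard texts, so your Gessel--Viennot argument has to stand on its own. Steps 1 and 3 are sound. The path count gives $h_{\lambda_i-i+j}$ with the stated conventions. Your column-strictness criterion is right: if $c$ is the first column with $T(i,c)\ge T(i+1,c)$, then the point $(c-1-i,\,T(i+1,c))$ lies on both $P_i$ and $P_{i+1}$. The passages to non-adjacent rows and to infinitely many variables are also fine.

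The gap is in Step 2: the map you define is not an involution, and in fact not even injective. You label paths by their sinks ($P_i$ ends at $B_i$), but you take $v$ to be the first shared point along $P_i$ and exchange the parts after $v$. After the exchange, the path ending at $B_i$ carries the \emph{initial} segment of the old $P_{i'}$. That segment may meet a third path before reaching $v$, and then your rule selects a different vertex. For instance, take $\lambda=(1,1,1)$, $N=3$, $w=\mathrm{id}$ and
\[
\begin{aligned}
P_1&:(-1,1)\to(-1,2)\to(0,2)\to(0,3),\\
P_2&:(-2,1)\to(-2,2)\to(-1,2)\to(-1,3),\\
P_3&:(-3,1)\to(-2,1)\to(-2,2)\to(-2,3).
\end{aligned}
\]
Call this family $F_0$ and your map $\phi$. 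Your rule picks $i=1$, $v=(-1,2)$, $i'=2$. This gives $F_1=\phi(F_0)$ with $w=(1\,2)$, and the path of $F_1$ ending at $B_1$ starts at $(-2,1)\in P_3$. Hence $\phi(F_1)$ uses $v=(-2,1)$, $i'=3$, and gives a family $F_2\neq F_0$ with $w$ a $3$-cycle; one checks that $\phi(F_2)=F_1$. So $F_0$ and $F_2$, both of sign $+1$, have the same image of sign $-1$, and the claimed cancellation does not follow.

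There are two standard repairs. You can label paths by their sources ($Q_j$ starts at $A_j$ and ends at $B_{\sigma(j)}$); then the segment of $Q_i$ up to $v$ is untouched, and $i,v,i'$ are genuinely recovered. Alternatively, keep your labelling but take $v$ to be the \emph{last} shared point on $P_i$ and exchange the initial segments. With either change, the rest of your proof goes through unchanged.
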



\subsection{Cayley Graphs and the Hoffman Bound}
Let $G$ be a finite group and let $X\subseteq G \backslash \{e\} $ be a generating set such that it is inverse-closed that is
$X=X^{-1}$. The \textit{Cayley graph}
$\operatorname{Cay}(G,X)$ is the graph whose vertex set is $G$, with two
vertices $g,h\in G$ adjacent whenever $g^{-1}h\in X$ .
A Cayley graph is called a normal if the generating set
$S$ is a union of conjugacy classes of $G$. 
 Let
$O_n=\{\sigma\in S_n:|\Fix(\sigma)|\   \text{is odd}\}$. We shall study the Cayley graph Cay$(G,O_n)$ on the symmetric group $S_n$ when $n$ is even. Clearly Cay$(S_n,O_n)$ is normal.

The eigenvalues of a normal Cayley graph can be expressed in terms of the
irreducible characters of the underlying group. The following result which is commonly credited to Babai \cite{babai1979spectra}, or Diaconis and Shahshahani \cite{diaconis1981generating} gives
the formula for these eigenvalues.
\begin{theorem}[Babai \cite{babai1979spectra}, Diaconis and Shahshahani \cite{diaconis1981generating}]\label{thm:Eigenvalue_chatacter_formula}
Let $G$ be a finite group and $X \subseteq G$ be a union of conjugacy classes and is inverse-closed. For the normal Cayley graph
$\operatorname{Cay}(G,X)$, the eigenvalues of its adjacency matrix are
\[
\theta_\rho
=
\frac{1}{\dim(\rho)}
\sum_{\tau\in X}\chi^\rho(\tau),
\]
where $\rho$ ranges over the irreducible representations of $G$ and
$\chi^\rho$ is its character. The eigenvalue $\theta_\rho$ has
multiplicity $\dim(\rho)^2$.
\end{theorem}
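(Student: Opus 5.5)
The plan is to realize the adjacency matrix of $\operatorname{Cay}(G,X)$ as an operator on the group algebra and then use the fact that $X$ is a union of conjugacy classes. Identify functions on $G$ with $\mathbb{C}[G]$. Since $g\sim h$ iff $h=gx$ for some $x\in X$, the adjacency operator is
\[
(Af)(g)=\sum_{x\in X} f(gx).
\]
Equivalently, $A$ is multiplication in $\mathbb{C}[G]$ by the element $a=\sum_{x\in X}x$ (on the appropriate side; I will fix the convention when writing it out). Inverse-closedness of $X$ makes $A$ symmetric, so its eigenvalues are real and $A$ is diagonalizable; in particular it suffices to determine the eigenvalues with multiplicities.

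The key step is that $a$ is central in $\mathbb{C}[G]$, because $gag^{-1}=\sum_{x\in X}gxg^{-1}=a$ when $X$ is closed under conjugation. By Maschke/Wedderburn,
\[
\mathbb{C}[G]\cong\bigoplus_{\rho}\operatorname{End}(V_\rho),
\]
where $\rho$ runs over the irreducible representations, and the regular representation decomposes as $\bigoplus_\rho V_\rho^{\oplus \dim\rho}$. On each summand the central element $a$ acts by $\rho(a)=\sum_{x\in X}\rho(x)$. This operator commutes with every $\rho(g)$, so by Schur's lemma $\rho(a)=\theta_\rho I$ for some scalar $\theta_\rho$.

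To identify the scalar, take traces:
\[
\theta_\rho\dim\rho=\operatorname{tr}\rho(a)=\sum_{x\in X}\chi^\rho(x).
\]
This gives $\theta_\rho=\frac{1}{\dim\rho}\sum_{x\in X}\chi^\rho(x)$. The isotypic component of $\rho$ in $\mathbb{C}[G]$ has dimension $(\dim\rho)^2$, and $A$ acts on it as the scalar $\theta_\rho$. Hence $\theta_\rho$ is an eigenvalue with multiplicity $(\dim\rho)^2$, where multiplicities add if distinct $\rho$ give equal values. The count is complete because $\sum_\rho(\dim\rho)^2=|G|$.

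The only point needing care is the left/right convention: which side $a$ multiplies on, and that the matching isotypic decomposition is used. Since $a$ is central, both sides give the same operator, so this is not a real obstacle.
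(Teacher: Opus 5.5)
The paper does not prove this statement: it is quoted from Babai and from Diaconis--Shahshahani and used as a black box, so there is no in-paper proof to compare against. Your argument is correct, and it is the standard one, essentially the Diaconis--Shahshahani Fourier-transform argument. The element $a=\sum_{x\in X}x$ is central in $\mathbb{C}[G]$. By Schur's lemma it therefore acts on each $(\dim\rho)^2$-dimensional isotypic block of the regular representation as the scalar $\frac{1}{\dim\rho}\sum_{x\in X}\chi^\rho(x)$, and these blocks exhaust $\mathbb{C}[G]$ because $\sum_\rho(\dim\rho)^2=|G|$.

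One bookkeeping point should be stated correctly when you write it out. Under the identification $f\leftrightarrow\sum_g f(g)g$, the operator $(Af)(g)=\sum_{x\in X}f(gx)$ is right multiplication by $\sum_{x\in X}x^{-1}$, not by $a$. Centrality only tells you that left and right multiplication agree. It is inverse-closedness that identifies $\sum_{x\in X}x^{-1}$ with $a$; without it the formula would involve $\overline{\chi^\rho}$ instead of $\chi^\rho$. So inverse-closedness is doing more than making $A$ symmetric.

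Your reading of the multiplicity clause is the right one: multiplicities add when distinct $\rho$ give the same value of $\theta_\rho$.
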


The character formula also extends naturally to weighted versions of normal Cayley graphs. One way to create a weighted adjacency matrix for a Cayley graph is as follows. Let Cay$(S_n,X)$ be a normal Cayley graph and $X_1,X_2,...,X_t$ be the conjugacy classes that are contained in $X$ and let $A_i$ denote the adjacency matrix of the Cayley graph Cay$(S_n,X_i)$ . Choose real numbers $c_1,c_2,...,c_t$ and define the matrix $A=\sum{c_iA_i}$. Then $A$ is a weighted adjacency matrix for the Cayley graph Cay$(S_n,X)$ with constant row sum $\sum{c_i |X_i|}$. Since the weights are constant on conjugacy classes, therefore by  \Cref{thm:Eigenvalue_chatacter_formula}, the eigenvalues of the weighted adjacency matrix $A$ of the Cayley graph Cay$(S_n,X)$ can be expressed directly in terms of the irreducible characters of \(S_n\).
\begin{lemma}\label{lem:Weighted_Adjacency_eigenvlues}
 Let $X_1,X_2,...,X_t$ be conjugacy classes of $S_n$. For real numbers $c_1,c_2,...,c_t$ define $A=\sum_{i=1}^{t}c_i A_i$, where $A_i$ is the adjacency matrix of the normal Cayley graph $\Cay(S_n,X_i)$. If $g_i \in X_i$, then the eigenvalue of $A$ corresponding to irreducible character $\chi^{\rho}$ is given by
    \[\theta_{\rho}=\frac{1}{\dim(\rho)} \sum_{i=1}^{t}c_i|X_i| \chi^{\rho}(g_i)\]
\end{lemma}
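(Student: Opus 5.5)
The statement to prove is \Cref{lem:Weighted_Adjacency_eigenvlues}. The plan is to reduce it to the single-class case of \Cref{thm:Eigenvalue_chatacter_formula} by simultaneous diagonalization. I would work with the convolution (group algebra) viewpoint. Each $A_i$ is the matrix of right multiplication by the class sum
\[
z_i=\sum_{\tau\in X_i}\tau\in\mathbb{C}[S_n]
\]
acting on $\mathbb{C}[S_n]$ via the regular representation. This holds because $g\sim h$ in $\Cay(S_n,X_i)$ iff $h\in gX_i$.

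The key structural fact is that each $z_i$ lies in the center of $\mathbb{C}[S_n]$, since $X_i$ is a conjugacy class. Hence all the $A_i$ commute with one another and with the left regular action. Decompose the regular representation as
\[
\mathbb{C}[S_n]\cong\bigoplus_{\rho}\rho^{\oplus\dim\rho}.
\]
By Schur's lemma, $z_i$ acts on each isotypic component $\rho^{\oplus \dim \rho}$ by a scalar $\omega_\rho(z_i)$. So every $A_i$ is diagonalized by a common decomposition, and $A=\sum_i c_iA_i$ acts on the $\rho$-isotypic block (of dimension $(\dim\rho)^2$) by the scalar $\sum_i c_i\,\omega_\rho(z_i)$. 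This is the linearity step: eigenvalues of the commuting, simultaneously diagonalizable family add with coefficients $c_i$ blockwise.

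It remains to compute the central character $\omega_\rho(z_i)$. Take the trace of $z_i$ acting on $\rho$. On one hand it equals $\dim(\rho)\,\omega_\rho(z_i)$. On the other hand it equals $\sum_{\tau\in X_i}\chi^\rho(\tau)=|X_i|\chi^\rho(g_i)$, since characters are class functions. Therefore
\[
\omega_\rho(z_i)=\frac{|X_i|\chi^\rho(g_i)}{\dim\rho},
\]
which yields exactly
\[
\theta_\rho=\frac{1}{\dim(\rho)}\sum_{i=1}^t c_i|X_i|\chi^\rho(g_i).
\]
Alternatively, one can simply cite \Cref{thm:Eigenvalue_chatacter_formula} for each $\Cay(S_n,X_i\cup X_i^{-1})$ and use that the eigenspaces are the same for all $i$. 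In $S_n$ every class is inverse-closed, so the inverse-closedness hypothesis is automatic.

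The only point needing care, and the one I would treat as the main obstacle, is justifying that the eigenspaces do not depend on $i$. \Cref{thm:Eigenvalue_chatacter_formula} as stated gives eigenvalues but not eigenvectors, so summing eigenvalues requires the common-eigenbasis argument via centrality above. Once simultaneous diagonalizability is in hand, the remainder is the routine trace computation.
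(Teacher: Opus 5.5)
Your proof is correct. It differs from the paper mainly in that it is self-contained rather than a citation.

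The paper gives no separate proof. It only remarks that the weights are constant on conjugacy classes and appeals to the Babai/Diaconis--Shahshahani theorem, which is essentially your ``alternative'' route. Your main argument instead reproves that theorem in weighted form:
\begin{itemize}
\item you realize each $A_i$ as multiplication by the central class sum $z_i$;
\item you use Schur's lemma to get a scalar on each $\rho$-isotypic block of the regular representation;
\item you identify that scalar as $|X_i|\chi^\rho(g_i)/\dim\rho$ by taking a trace.
\end{itemize}

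What this buys is the step the paper's one-line justification leaves implicit. The cited theorem, as stated in the paper, gives eigenvalues and multiplicities but not eigenspaces. So adding the eigenvalues of the different $A_i$ with coefficients $c_i$ requires knowing that all the $A_i$ are diagonalized by the same decomposition. Your centrality argument supplies exactly that, with the isotypic components as the common eigenspaces.

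Your argument also uses nothing about generation or connectivity. This matters because a single conjugacy class $X_i$ need not generate $S_n$, as the paper's definition of a Cayley graph demands.

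The paper's route is shorter, but it is only rigorous if one reads the cited theorem as either identifying the eigenspaces or applying directly to class-function weights.
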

whose multiplicity is $\text{dim}(\rho)^2$.

Having determined the eigenvalues of the weighted adjacency matrix $A$, we now use this spectral information to bound the size of an independent set. The precise result that we shall use for this purpose is Hoffman’s ratio bound, stated below in a form suitable for weighted adjacency matrix of a graph. For a proof see \cite{ellis2011intersecting,meagher2021erdHos}.

\begin{theorem}[Weighted Hoffman's ratio bound]
Let $G$ be a connected graph with vertex set $V(G)$. Let $A$ be a weighted adjacency matrix of $G$ with constant row sum $d$. Then,
\[\alpha(G) \le |V(G)| \frac{-\lambda_{min}}{d-\lambda_{min}}\]
    where $\lambda_{min}$ denotes the least eigenvalue of $A$. Furthurmore, if equality holds for some independent set $I \subseteq V(G)$ whose characteristic vector is $v_I$, then the balanced characteristic vector $v_I-\frac{|I|}{|V(G)|}\textbf{1}$ belongs to the eigenspace corresponding to $\lambda_{min}$.
\end{theorem}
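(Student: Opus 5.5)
The statement to prove is the weighted Hoffman ratio bound. I will follow the standard spectral argument via the Rayleigh quotient, using only that $A$ is a real symmetric matrix with constant row sum $d$. I also need that $A_{uv}=0$ whenever $u,v$ are nonadjacent or $u=v$. In our setting both hold, since the weights $c_i$ sit only on conjugacy classes inside the inverse-closed set $X\not\ni e$, and each class of $S_n$ is closed under inversion.

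Let $N=|V(G)|$ and let $I$ be an independent set with characteristic vector $v_I$, and write $\alpha=|I|$. Since $I$ is independent and $A$ vanishes on non-edges and on the diagonal, $v_I^{T}Av_I=\sum_{u,v\in I}A_{uv}=0$. Decompose
\[
v_I=\frac{\alpha}{N}\mathbf{1}+w,\qquad w\perp\mathbf{1}.
\]
Constant row sums give $A\mathbf{1}=d\mathbf{1}$, so $\mathbf{1}$ is an eigenvector. Because $A$ is symmetric, the subspace $\mathbf{1}^{\perp}$ is $A$-invariant. Hence the cross terms vanish and
\[
0=v_I^TAv_I=\frac{\alpha^2}{N^2}\,d\,N+w^TAw\ \ge\ \frac{\alpha^2 d}{N}+\lambda_{\min}\|w\|^2 .
\]
Here $\|w\|^2=\|v_I\|^2-\alpha^2/N=\alpha-\alpha^2/N$. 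Substituting gives $0\ge \frac{\alpha^2}{N}d+\lambda_{\min}\alpha(1-\alpha/N)$. Dividing by $\alpha>0$ and rearranging gives $\alpha(d-\lambda_{\min})\le -\lambda_{\min}N$.

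This rearrangement needs $d-\lambda_{\min}>0$. Every eigenvalue of $A$ satisfies $|\lambda|\le d$ when the weights are nonnegative, and $\lambda_{\min}<0$ because $\operatorname{tr}A=0$ and $A\neq 0$. So $d>\lambda_{\min}$ unless $A=0$, and the bound follows.

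The weights need not be nonnegative, and for general real weights $d>0$ is not automatic. I will therefore state the hypothesis explicitly: $d>\lambda_{\min}$, with $\lambda_{\min}<0$ guaranteed by $\operatorname{tr}A=0$. This is the one delicate point, and in the application I will check it directly. Connectivity of $G$ is not needed for the inequality.

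For the equality clause, equality forces $w^TAw=\lambda_{\min}\|w\|^2$. Expanding $w$ in an orthonormal eigenbasis of $A$ on $\mathbf{1}^\perp$, this means $w$ has no component outside the $\lambda_{\min}$-eigenspace. Hence $w=v_I-\frac{|I|}{N}\mathbf{1}$ lies in that eigenspace, which is exactly the claimed characterisation.
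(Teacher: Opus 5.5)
Your proof is correct and is the standard Rayleigh-quotient argument (decompose $v_I=\frac{|I|}{N}\mathbf{1}+w$, use $v_I^TAv_I=0$ and $w^TAw\ge\lambda_{\min}\|w\|^2$, then read off the equality case from the eigenbasis expansion of $w$); the paper gives no proof of its own and instead cites Ellis--Friedgut--Pilpel and Meagher et al., who use this same argument. Your added hypothesis $d>\lambda_{\min}$ is the right caveat: $d$ is itself an eigenvalue, so the hypothesis only excludes the degenerate case $d=\lambda_{\min}$, where the ratio is undefined, and it holds in the paper's application, where $d=1$ and the weights are nonnegative.
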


We shall also need a standard generalization of the Vandermonde determinant, in which the monomials $1,x,\ldots,x^{n-1}$ are replaced by polynomials of the corresponding degrees.

\begin{lemma}[General Vandermonde Determinant\cite{krattenthaler1999advanced}]\label{lem:General_Vandermonde} Let $P_1, P_2, \ldots, P_n$ are the polynomials of the form $P_j(x)=a_jx^{j-1}+$ lower terms, then $$\det[P_j(x_i)]_{1\leq i,j\leq n}=a_1a_2\ldots a_n \prod_{1\leq i<j\leq n}(x_j-x_i).$$ \end{lemma}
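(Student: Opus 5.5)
The plan is to factor the matrix $[P_j(x_i)]$ as the ordinary Vandermonde matrix times an upper triangular matrix, so that the determinant splits into the product of the leading coefficients and the classical Vandermonde product.

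First, I will expand each polynomial in the monomial basis. By hypothesis, for each $j$ there are scalars $c_{kj}$, $1\le k\le j$, with $c_{jj}=a_j$, such that
\[
P_j(x)=\sum_{k=1}^{j} c_{kj}\,x^{k-1}.
\]
Setting $c_{kj}=0$ for $k>j$ defines an $n\times n$ upper triangular matrix $U=(c_{kj})$ whose diagonal entries are $a_1,\ldots,a_n$. Let $V=(x_i^{k-1})_{1\le i,k\le n}$. Then
\[
P_j(x_i)=\sum_{k=1}^n x_i^{k-1}c_{kj}=(VU)_{ij}.
\]
By multiplicativity of the determinant,
\[
\det[P_j(x_i)]=\det V\cdot\det U=\det V\cdot a_1a_2\cdots a_n.
\]
Equivalently, one can subtract suitable multiples of earlier columns from later ones to kill the lower-order terms; this is the same argument phrased as column operations.

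It then remains to establish the classical identity $\det V=\prod_{1\le i<j\le n}(x_j-x_i)$. I will do this by induction on $n$, the case $n=1$ being trivial. For the inductive step:
\begin{itemize}
\item Regard $\det V$ as a polynomial in $x_n$. Expanding along the last row shows it has degree at most $n-1$, with coefficient of $x_n^{n-1}$ equal to the $(n-1)\times(n-1)$ Vandermonde determinant in $x_1,\ldots,x_{n-1}$.
\item Assume first that $x_1,\ldots,x_{n-1}$ are distinct. The polynomial vanishes at $x_n=x_1,\ldots,x_{n-1}$, since two rows then coincide. Hence it equals $\det V_{n-1}\cdot\prod_{i<n}(x_n-x_i)$, and the induction hypothesis gives the formula.
\item When $x_1,\ldots,x_{n-1}$ are not distinct, both sides are polynomial identities in indeterminates, so the formula extends by continuity. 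Alternatively, one can work over $\mathbb{Q}[x_1,\ldots,x_n]$ from the start, where the same divisibility argument applies directly.
\end{itemize}

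No step is a genuine obstacle. The only points needing care are the bookkeeping that makes $U$ upper triangular with diagonal exactly $(a_j)$, and the handling of non-distinct nodes in the Vandermonde step, which the polynomial-identity viewpoint resolves.
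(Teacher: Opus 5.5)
Your proof is correct and complete. The factorization $[P_j(x_i)]=VU$ with $U$ upper triangular and diagonal $(a_1,\dots,a_n)$, followed by the classical Vandermonde evaluation via degree count and vanishing at $x_n=x_i$, is the standard argument; the paper gives no proof of its own and only cites Krattenthaler, so there is no different route to compare against. One small remark on your parenthetical column-reduction version: subtracting earlier columns to reduce $P_j$ to $a_jx^{j-1}$ implicitly divides by $a_1,\dots,a_{j-1}$, whereas $\det(VU)=\det V\cdot\det U$ needs no assumption $a_k\neq 0$, so the factorization is the cleaner formulation to keep.
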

\section{A Character-Theoretic Optimization Bound}
\label{sec:optimization_bound}

\subsection{A Fixed-Point Transform}

For a given partition $\lambda=(\lambda_1,\lambda_2,\dots,\lambda_r)\vdash n$, we introduce the following notation used throughout this section:
$$
\ell:=\lambda_1,\qquad d:=n-\ell,\qquad
\overline{\lambda}:=(\lambda_2,\dots,\lambda_r)\vdash d.
$$
We shall use the standard cycle-index identity
\begin{equation}
\sum_{m\geq 0}u^m
\sum_{\rho\vdash m}
\frac{1}{z_\rho}
\prod_{k\geq 1}t_k^{m_k(\rho)}
=
\exp\left(
\sum_{k\geq 1}t_k\frac{u^k}{k}
\right).
\label{eq:cycle-index}
\end{equation}

This is the usual cycle-index identity for the symmetric groups, see \cite[Example: 5.2.10]{Stanley_2023}.

Further, for $m\in\mathbb{Z}$ and $t\in\mathbb{R}$, we define the truncated exponential polynomial as
$$
E_m(t):=
\begin{cases}
\displaystyle\sum_{j=0}^{m}\frac{t^j}{j!},
& \text{if } m\geq 0,\\[2mm]
0,
& \text{if } m<0.
\end{cases}
$$

We will also need the following asymptotic behavior of the truncated exponential polynomial. The following lemma gives the form that will be useful in our proofs later.

\begin{lemma}
Let $X:=\sqrt{n}+1$ and $Y:=\sqrt{n}-1$. 
As $n\to\infty$, uniformly for all integers $m\geq 4X$,
$$
E_m(Y)=e^Y(1+o(1))
\qquad\text{and}\qquad
E_m(-X)=e^{-X}(1+o(1)).
$$
\end{lemma}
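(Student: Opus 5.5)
We bound the tail of the exponential series directly. For any real $t$ and integer $m\ge 0$,
$$
e^t - E_m(t)=\sum_{j>m}\frac{t^j}{j!},
\qquad\text{so}\qquad
|e^t-E_m(t)|\le \sum_{j>m}\frac{|t|^j}{j!}.
$$
Hence it suffices to show that, uniformly for $m\ge 4X$,
$$
R_m(|t|):=\sum_{j>m}\frac{|t|^j}{j!}=o\bigl(e^{t}\bigr)
$$
for both $t=Y$ and $t=-X$. Here $|t|\le X=\sqrt n+1$ in both cases. Since $R_m$ is decreasing in $m$, it is enough to treat the smallest admissible $m$, namely $m_0=\lceil 4X\rceil$.

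The key step is a geometric-series bound for the tail. For $j\ge m_0+1>4X\ge 4|t|$, consecutive ratios satisfy $|t|/(j+1)\le 1/4$. Therefore
$$
R_{m_0}(|t|)\le \frac{|t|^{m_0+1}}{(m_0+1)!}\cdot\frac{1}{1-1/4}\le \frac43\,\frac{X^{m_0+1}}{(m_0+1)!}.
$$
Stirling's formula, $k!\ge (k/e)^k$, with $k=m_0+1\ge 4X$ gives
$$
\frac{X^{k}}{k!}\le \left(\frac{eX}{k}\right)^{k}\le \left(\frac{e}{4}\right)^{k}\le \left(\frac e4\right)^{4X}=e^{-4X\log(4/e)}.
$$
Since $4\log(4/e)=4(\log 4-1)\approx 1.545$, this yields
$$
R_{m}(|t|)=O\!\left(e^{-1.54X}\right)
$$
uniformly in $m\ge 4X$.

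We now compare this bound with the targets. For $t=Y$, the target is $e^{Y}\ge 1$ and the error is $e^{-\Omega(X)}\to 0$, so we get $E_m(Y)=e^Y(1+o(1))$ with plenty of room. For $t=-X$, the error is $O(e^{-1.54X})$ while the target is $e^{-X}$. The relative error is therefore $O(e^{-0.54X})\to0$, which gives $E_m(-X)=e^{-X}(1+o(1))$.

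The only delicate point is the second case. There we must check that the constant $4$ in the hypothesis $m\ge 4X$ is large enough that $4\log(4/e)>1$, which it is. With a smaller multiple, such as $m\ge 2X$, the exponent $2\log(2/e)$ is negative and the argument would fail. The first case is routine.
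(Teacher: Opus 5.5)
Your proof is correct and follows essentially the same route as the paper: bound the tail by a constant times $X^{k}/k!$ with $k=m+1$, use $k!\ge (k/e)^k$ and $k\ge 4X$ to reach $(e/4)^{4X}$, then compare with $e^{Y}$ and $e^{-X}$. The only difference is cosmetic. The paper uses the alternating-series remainder for $t=-X$, whereas you use the absolute geometric tail bound in both cases, which costs only a harmless factor $4/3$.
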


\begin{proof}
Put $k:=m+1$. Since $m\geq 4X$, the alternating-series remainder estimate gives
\begin{align*}
    \left|E_m(-X)-e^{-X}\right|
\leq \frac{X^k}{k!}\qquad\text{and}\qquad
0\leq e^Y-E_m(Y)
\leq  \frac{4}{3}\frac{Y^k}{k!}.
\end{align*}
Using $k!\geq \left(\frac{k}{e}\right)^k$ together with $Y<X$ and $k\geq 4X$, we obtain
$$
\frac{\left|E_m(-X)-e^{-X}\right|}{e^{-X}}
\leq
e^X\left(\frac{eX}{k}\right)^k
\leq
e^X\left(\frac{e}{4}\right)^{4X},
$$
and
$$
\frac{e^Y-E_m(Y)}{e^Y}
\leq
\frac{4}{3}e^{-Y}
\left(\frac{eY}{k}\right)^k
\leq
\frac{4}{3}e^{-Y}
\left(\frac{e}{4}\right)^{4X}.
$$
Both bounds tend to zero as $n\to\infty$, independently of $m\geq 4X$. This proves the result.
\end{proof}

The dependence of character values on the number of fixed points will be a central theme in our work. We therefore introduce a generating function that gives a weighted sum of character values, where the weights depend on the number of fixed points of the corresponding cycle types.

\begin{definition}[Fixed-point transform]
For $\lambda\vdash n$ and $t\in\mathbb{R}$, we define the fixed-point transform by
$$
\Phi_\lambda(t):=
\sum_{\rho\vdash n}
\frac{t^{m_1(\rho)}}{z_\rho}\chi^\lambda(\rho).
$$
\end{definition}

We next derive an expansion of the fixed-point transform that is more suitable for our later calculations. The transform can be expressed in terms of the scalar products of Schur functions with $p_1^j h_{n-j}$.

\begin{lemma}\label{lem:Phi_lambda_series_expansion}
For every $\lambda\vdash n$, we have
\begin{equation}
\Phi_\lambda(1+t)
=
\sum_{j=0}^{n}
\frac{t^j}{j!}\,A_{\lambda,j},
\label{eq:Phi-power-expansion}
\end{equation}
where $A_{\lambda,j}
:=
\left\langle p_1^j h_{n-j},s_\lambda\right\rangle.$
\end{lemma}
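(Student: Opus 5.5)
We plan to read off $\Phi_\lambda(1+t)$ as a Hall inner product and then expand. By the definition of the Frobenius characteristic map,
\[
\Phi_\lambda(s)=\sum_{\rho\vdash n}\frac{s^{m_1(\rho)}}{z_\rho}\chi^\lambda(\rho)
=\Big\langle \sum_{\rho\vdash n}\frac{s^{m_1(\rho)}}{z_\rho}p_\rho,\; s_\lambda\Big\rangle .
\]
This uses $\langle p_\rho,s_\lambda\rangle=\chi^\lambda(\rho)$, which is equivalent to \eqref{eq:p_1_j}. So it suffices to identify the symmetric function
\[
F_n(s):=\sum_{\rho\vdash n}\frac{s^{m_1(\rho)}}{z_\rho}p_\rho .
\]

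Next we use the cycle-index identity \eqref{eq:cycle-index} with $t_1=s\,p_1$ and $t_k=p_k$ for $k\ge 2$. Since $p_\rho=\prod_k p_k^{m_k(\rho)}$, this gives
\[
\sum_{m\ge0}u^m F_m(s)=\exp\Big(s\,p_1u+\sum_{k\ge2}p_k\frac{u^k}{k}\Big).
\]
Substituting $s=1+t$ splits the right-hand side as
\[
\exp(t\,p_1u)\cdot\exp\Big(\sum_{k\ge1}p_k\frac{u^k}{k}\Big)=\Big(\sum_{j\ge0}\frac{t^jp_1^ju^j}{j!}\Big)\Big(\sum_{m\ge0}h_mu^m\Big),
\]
by the stated generating function relating $h_m$ and $p_k$. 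Comparing coefficients of $u^n$ yields
\[
F_n(1+t)=\sum_{j=0}^n\frac{t^j}{j!}\,p_1^jh_{n-j}.
\]

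Finally, we pair $F_n(1+t)$ with $s_\lambda$ using bilinearity of the Hall inner product. This gives \eqref{eq:Phi-power-expansion} with $A_{\lambda,j}=\langle p_1^jh_{n-j},s_\lambda\rangle$.

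The only delicate point is justifying these manipulations as formal power series in $u$ with coefficients in $\Lambda[t]$. Matching coefficients of $u^n$ is legitimate because each coefficient involves only finitely many terms. The substitution into \eqref{eq:cycle-index} is also legitimate: that identity holds for independent indeterminates $t_k$, so we may specialize them to elements of the commutative ring $\Lambda[s]$. Apart from this bookkeeping, no step should pose an obstacle.
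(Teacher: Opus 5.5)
Your proposal is correct and follows essentially the same route as the paper: specialize the cycle-index identity \eqref{eq:cycle-index} via $t_1\mapsto(1+t)p_1$ and $t_k\mapsto p_k$ for $k\ge 2$, factor the right-hand side as $e^{tp_1u}\sum_{m\ge0}h_mu^m$, extract the coefficient of $u^n$, and pair with $s_\lambda$ using $\langle p_\rho,s_\lambda\rangle=\chi^\lambda(\rho)$. The only differences are cosmetic: you introduce an intermediate variable $s$ before setting $s=1+t$, and you add a remark justifying the formal-power-series bookkeeping.
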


\begin{proof}
In the cycle-index identity \eqref{eq:cycle-index}, make the substitutions
$$
t_1\longmapsto (1+t)p_1,
\qquad
t_k\longmapsto p_k
\quad (k\geq 2).
$$
Since $p_\rho=\prod_{k\geq 1}p_k^{m_k(\rho)}$, we obtain
$$
\sum_{m\geq 0}u^m
\sum_{\rho\vdash m}
\frac{(1+t)^{m_1(\rho)}}{z_\rho}\,p_\rho
=
\exp\left(
(1+t)p_1u+
\sum_{k\geq 2}p_k\frac{u^k}{k}
\right).
$$
Using
$$
\sum_{m\geq 0}h_mu^m
=
\exp\left(
\sum_{k\geq 1}p_k\frac{u^k}{k}
\right),
$$
the right-hand side becomes
$$
e^{tp_1u}\sum_{m\geq 0}h_mu^m.
$$
Therefore, taking the homogeneous component of degree $n$, we get
$$
\sum_{\rho\vdash n}
\frac{(1+t)^{m_1(\rho)}}{z_\rho}\,p_\rho
=
\sum_{j=0}^{n}
\frac{t^j}{j!}\,p_1^j h_{n-j}.
$$
Taking the Hall inner product with $s_\lambda$, and using $
\langle p_\rho,s_\lambda\rangle
=
\chi^\lambda(\rho),
$ gives
$$\Phi_\lambda(1+t)=\sum_{j=0}^{n}\frac{t^j}{j!}\left\langle p_1^j h_{n-j},s_\lambda\right\rangle.$$
\end{proof}

The coefficients $A_{\lambda,j}$ appearing in the expansion above can be expressed as sums of the dimensions $f^\nu$, where the partitions $\nu$ are determined by a horizontal-strip condition. The following lemma gives this expression and the bounds on $A_{\lambda,j}$ that will be used later.

\begin{lemma}\label{lem:A_lambda_j bound}
Let $\lambda = (\lambda_1, \lambda_2, \ldots, \lambda_r)\vdash n$. For any integer $j$, define $A_{\lambda,j} = \left\langle p_1^j h_{n-j}, s_\lambda \right\rangle$, then
$$A_{\lambda,j} = \sum_{\substack{\nu \vdash j \\ \lambda/\nu \text{ is a horizontal} \\ (n-j)\text{-strip}}} f^\nu.
$$
Furthermore, $A_{\lambda, j}$ satisfies the following properties:
\begin{enumerate}
    \item $A_{\lambda,j} = 0$ for $j < d$.
    \item $A_{\lambda,j} \leq \binom{j}{d} f^{\overline{\lambda}}$ for $j \geq d$, with equality holding when $d \leq j \leq \ell$.
\end{enumerate}
\end{lemma}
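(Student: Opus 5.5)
We first establish the formula for $A_{\lambda,j}$ and then read off the two properties. Since $p_1^j=\sum_{\nu\vdash j}f^\nu s_\nu$ (this is \eqref{eq:p_1_j} with $\rho=(1^j)$, where $\chi^\nu(1^j)=f^\nu$), we get
$$p_1^jh_{n-j}=\sum_{\nu\vdash j}f^\nu\, s_\nu h_{n-j}.$$
Pieri's rule (\Cref{lem:Pieri's_Formula}) expands each $s_\nu h_{n-j}$ as the sum of $s_\lambda$ over all $\lambda$ with $\lambda/\nu$ a horizontal $(n-j)$-strip. Taking the Hall inner product with $s_\lambda$ and using orthonormality of the Schur functions gives the stated sum. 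For $j<0$ or $j>n$ the expression $p_1^jh_{n-j}$ is interpreted as $0$ (since $h_r=0$ for $r<0$), and the sum is then empty, so the formula holds for every integer $j$.

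Next we describe the admissible $\nu$. The skew shape $\lambda/\nu$ is a horizontal strip exactly when $\nu$ interlaces $\lambda$, that is, $\lambda_1\ge\nu_1\ge\lambda_2\ge\nu_2\ge\cdots\ge\lambda_r\ge\nu_r\ge0$. In particular $\nu_i\ge\lambda_{i+1}$ for all $i$, so $\nu$ contains $\overline{\lambda}$ shifted up by one row, and $j=|\nu|\ge\lambda_2+\cdots+\lambda_r=d$. This proves property 1. Now fix $j\ge d$. For each admissible $\nu$, the skew shape $\nu/\overline{\lambda}$ (with $\overline{\lambda}$ embedded as $(\lambda_2,\lambda_3,\ldots)$) has $j-d$ boxes and is itself a horizontal strip.

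For property 2, the key step is to bound $f^\nu$ by counting standard Young tableaux of shape $\nu$ via the positions of the entries. Since $\overline{\lambda}\subseteq\nu$, we have
$$f^\nu=\sum_{\mu}f^{\mu}\,f^{\nu/\mu}$$
over the relevant intermediate shapes; we use instead the cleaner inequality $f^\nu\le\binom{j}{d}f^{\overline{\lambda}}f^{\nu/\overline{\lambda}}$. This holds because a standard tableau of shape $\nu$ is determined by the set of $d$ labels occupying the cells of $\overline\lambda$ (at most $\binom jd$ choices), a standard filling of $\overline\lambda$ by those labels, and a standard filling of $\nu/\overline\lambda$ by the rest; not all such triples are valid, which is why we only get an inequality. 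Summing over $\nu$, we need
$$\sum_{\nu}f^{\nu/\overline{\lambda}}\le 1,$$
which is not true as stated. So we instead argue more carefully, and this is the main obstacle.

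The better approach is an injection. Summing $f^\nu$ over admissible $\nu$ counts pairs ($\nu$, standard tableau $T$ of shape $\nu$). We map each such $T$ to the pair (the set $S$ of labels lying in rows $\ge2$ of the shape, the standardized tableau of shape $\overline{\lambda}$ obtained from the cells below the first row). It is cleaner to compare with $\lambda$ directly. Consider $\tilde\nu=(\nu_1,\ldots)$ and note that $\nu$ determines and is determined by the cells of rows $\ge2$ together with $\nu_1$. Rows $\ge2$ of $\nu$ form a shape $\nu^-=(\nu_2,\nu_3,\ldots)$ with $\lambda_3\le\nu_2\le\lambda_2,\dots$, hence $\nu^-\subseteq\overline{\lambda}$. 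Then $f^\nu\le\binom{j}{|\nu^-|}f^{\nu^-}$ (choose the labels in the first row, which must be a row-increasing set; at most one valid arrangement). Since $\nu^-$ ranges over shapes with $\overline{\lambda}/\nu^-$ a horizontal strip and $\nu_1=j-|\nu^-|\le\ell$, we sum $\binom{j}{|\nu^-|}f^{\nu^-}$ and need this to be at most $\binom jd f^{\overline\lambda}$. Using $\binom{j}{d-k}\le\binom jd\cdot(\text{ratio})$ together with $f^{\overline\lambda}\ge\sum_{\nu^-\,:\,|\nu^-|=d-k}f^{\nu^-}\cdot(\ldots)$, obtained from a branching rule that removes horizontal strips, requires a careful inequality. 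I expect to handle it by the bijection $T\mapsto$ (first-row labels, rest), showing the map from admissible $T$ into (subsets of size $d$) $\times$ SYT$(\overline\lambda)$ is injective. This works when placing labels appropriately: rows $\ge2$ of $T$ occupy the shifted cells $\nu_i\ge\lambda_{i+1}$, and we push the labels into $\overline{\lambda}$.

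For the equality case $d\le j\le\ell$, we take $\nu=(j-d,\lambda_2,\ldots,\lambda_r)$. Interlacing then forces $\nu_i=\lambda_{i+1}$ only if $j-d<\lambda_2$; in general, $\nu$ is the shape $\overline\lambda$ with $j-d$ extra first-row cells placed appropriately. Every choice of $d$ labels for the lower part, combined with an SYT of $\overline\lambda$, gives a valid tableau because $j\le\ell$ leaves the first row unconstrained. We would verify this case directly through the hook-length product.
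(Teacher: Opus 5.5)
Your derivation of the horizontal-strip formula and of property 1 is fine and matches the paper. Property 2, however, is never actually proved, and the equality case is argued incorrectly.

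\textbf{Why the proposed routes fail.} Your first attempt fails, as you note yourself. The injection you then propose does not work either. Sending a tableau $T$ of shape $\nu$ to the set of labels in the cells of $\overline{\lambda}\subseteq\nu$, together with their standardization, is not injective, because the label $1$ always lies in those cells. Concretely, take $\lambda=(2,1)$ and $j=2$. The admissible shapes are $(2)$ and $(1,1)$, and both of their tableaux map to the same pair $(\{1\},\,\cdot\,)$, although $A_{\lambda,2}=2=\binom{2}{1}f^{(1)}$.

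The variant ``(first-row labels, rest)'' does not even land in $\binom{[j]}{d}\times\mathrm{SYT}(\overline{\lambda})$, since the lower rows of $\nu$ form a shape $\nu^-\subseteq\overline{\lambda}$ whose size varies. The bound $f^\nu\le\binom{j}{|\nu^-|}f^{\nu^-}$ is also too lossy: in the same example it gives $1+2=3>2$.

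Your equality argument is wrong as well. $A_{\lambda,j}$ is a sum over many $\nu$, not over the single shape $(j-d,\lambda_2,\ldots,\lambda_r)$, which need not even be a partition. It is also false that every choice of $d$ labels for the lower rows yields a standard tableau, since the label $1$ can never sit below the first row.

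\textbf{The missing step.} You had the key observation already: every admissible $\nu$ has $\nu/\overline{\lambda}$ a horizontal $(j-d)$-strip. What is missing is to apply Pieri's rule to $s_{\overline{\lambda}}h_{j-d}$ and take dimensions, or equivalently pair with $p_1^j$:
$$\sum_{\substack{\mu\vdash j\\ \mu/\overline{\lambda}\text{ horizontal strip}}} f^\mu=\dim \Ind_{S_d\times S_{j-d}}^{S_j}\bigl(S^{\overline{\lambda}}\otimes\mathbf{1}\bigr)=\binom{j}{d}f^{\overline{\lambda}}.$$

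The interlacing conditions $\lambda_i\ge\mu_i\ge\lambda_{i+1}$, with the single condition $\mu_1\le\lambda_1$ removed, say exactly that $\mu/\overline{\lambda}$ is a horizontal strip. So the admissible $\nu$ are precisely those $\mu$ with $\mu_1\le\ell$. Hence $A_{\lambda,j}$ is a sub-sum of nonnegative terms, which gives the inequality. When $j\le\ell$, the condition $\mu_1\le j\le\ell$ holds automatically, so the two sums coincide and equality holds. This is the paper's argument.
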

\begin{proof} From the \Cref{eq:p_1_j} and \Cref{lem:Pieri's_Formula}, we have
    \begin{align}
        A_{\lambda,j}&=\left\langle p_1^j h_{n-j}, s_\lambda \right\rangle\nonumber\\
        &=\sum_{\nu\vdash j}f^{\nu}\left\langle s_{\nu} \,h_{n-j},\,s_{\lambda} \right\rangle\nonumber\\
        &=\sum_{\substack{\nu\vdash j\\ \lambda/\nu \text{ is a horizontal }\\(n-j)\text{-strip}}}f^{\nu}.\label{eq:A_lambda_j_expression}
    \end{align}
    Note that $\lambda/\nu$ is a horizontal strip if and only if the interlacing inequalities
    $$\lambda_i\geq \nu_i\geq \lambda_{i+1}$$
    hold for every $i$. In particular, we have $ \nu_i\geq \lambda_{i+1}=\overline{\lambda_i}$, so $\nu\supseteq \overline{\lambda}$, and therefore $j=|\nu|\geq |\overline{\lambda}|=d$. Therefore $A_{\lambda,j}=0$ when $j<d$.

    Again, note that $\nu_i\geq \overline{\lambda_i}=\lambda_{i+1}\geq \nu_{i+1}$, therefore $\nu/\overline{\lambda}$ is a horizontal strip. Now by \Cref{lem:Pieri's_Formula}, we have 
    \begin{equation*}
        s_{\overline{\lambda}}\, h_{j-d}=\sum_{\substack{\mu\vdash j\\ \mu/\overline{\lambda} \text{ is a horizontal }\\(j-d)\text{-strip}}}s_{\mu}.
    \end{equation*}
    Therefore, Frobenius characteristic correspondence implies
    \begin{align*}
        \sum_{\substack{\mu\vdash j\\ \mu/\overline{\lambda} \text{ is a horizontal }\\(j-d)\text{-strip}}}\chi^{\mu}=\Ind_{S_d\times S_{j-d}}^{S_j}\left(\chi^{\overline{\lambda}}\otimes\chi^{(j-d)}\right)
    \end{align*}
Now, taking dimensions gives
\begin{align}
        \sum_{\substack{\mu\vdash j\\ \mu/\overline{\lambda} \text{ is a horizontal }\\(j-d)\text{-strip}}}f^{\mu}&=\dim\left(\Ind_{S_d\times S_{j-d}}^{S_j}\left(\chi^{\overline{\lambda}}\otimes\chi^{(j-d)}\right)\right)
        =\binom{j}{d}f^{\overline{\lambda}}.\label{eq:s_lambda_h_j-d_pieri_formula}
    \end{align}
Since, if $\lambda/\nu$ is a horizontal strip then $\nu/\overline{\lambda}$ is a horizontal strip. Thus, the set of $\nu\vdash j$ in the summation in \Cref{eq:A_lambda_j_expression} is a subset of the set of  $\mu\vdash j$ in the summation in \Cref{eq:s_lambda_h_j-d_pieri_formula}. Thus, $A_{\lambda,j} \leq \binom{j}{d} f^{\overline{\lambda}}$ for $j \geq d$. Again, if $d\leq j\leq \ell$, then the summation sets in \Cref{eq:A_lambda_j_expression,eq:s_lambda_h_j-d_pieri_formula} are same, therefore equality holds in this case.
\end{proof}

We next obtain another useful representation of the fixed-point transform by applying the Jacobi--Trudi identity. This gives a determinantal expression for $\Phi_\lambda(1+t)$, which can be further rewritten as a finite linear combination of the partial exponential sums $E_m(t)$.

\begin{lemma}[Jacobi--Trudi expansion]\label{lem:Jacobu-Trudi}
For $\lambda=(\lambda_1,\lambda_2,\dots,\lambda_r)\vdash n$, we have
\begin{equation*}
\Phi_\lambda(1+t)
=
\det\left[
E_{\lambda_i-i+j}(t)
\right]_{i,j=1}^{r}.
\end{equation*}
Moreover, if $l_i:=\lambda_i+r-i
\text{ and }
\delta_i:=l_1-l_i
\quad (1\leq i\leq r),
$ then
\begin{equation*}
\Phi_\lambda(1+t)
=
\sum_{i=1}^{r}
(-1)^{i+1}\,c_i\,
t^{d+\delta_i}\,
E_{\ell-\delta_i}(t),
\end{equation*}
where
\begin{equation*}
c_i:=
\frac{
\displaystyle
\prod_{\substack{p<q\\p,q\neq i}}
(l_p-l_q)
}{
\displaystyle
\prod_{p\neq i}l_p!
}.
\end{equation*}
Furthermore, $
c_i>0
\text{ for every }1\leq i\leq r,
$ and $c_1=\frac{f^{\overline{\lambda}}}{d!}$.
\end{lemma}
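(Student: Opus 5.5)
Starting from the proof of \Cref{lem:Phi_lambda_series_expansion}, the degree-$n$ part of $e^{tp_1u}\sum_m h_m u^m$ is obtained from the usual series by the substitution $h_m\mapsto \tilde h_m:=\sum_{j=0}^m \frac{t^j}{j!}p_1^j h_{m-j}$. That is, $\sum_{\rho\vdash n}\frac{(1+t)^{m_1(\rho)}}{z_\rho}p_\rho=\tilde h_n$. The functional $F\mapsto \langle F,s_\lambda\rangle$ is awkward to apply directly to a product like this, so I will use a specialization instead. Define the ring homomorphism $\phi_t:\Lambda\to\mathbb{R}$ by $\phi_t(p_1)=t+1$ and $\phi_t(p_k)=1$ for $k\ge2$. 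This is the "character value" specialization, and one checks that $\Phi_\lambda(1+t)=\phi_t(s_\lambda)$ directly from the Frobenius map. Indeed, $s_\lambda=\sum_\rho \chi^\lambda(\rho)p_\rho/z_\rho$, so $\phi_t(s_\lambda)=\sum_\rho \chi^\lambda(\rho)(1+t)^{m_1(\rho)}/z_\rho$, which is $\Phi_\lambda(1+t)$ by definition.

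Next I compute $\phi_t(h_m)$ from the generating function: $\sum_m \phi_t(h_m)u^m=\exp\bigl((1+t)u+\sum_{k\ge2}u^k/k\bigr)=e^{tu}/(1-u)$. Hence $\phi_t(h_m)=E_m(t)$, and $\phi_t(h_m)=0$ for $m<0$. Applying $\phi_t$ to the Jacobi--Trudi identity (\Cref{lem:Jacobi_Trudi}) gives the first claim, $\Phi_\lambda(1+t)=\det[E_{\lambda_i-i+j}(t)]$.

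For the second formula, I rewrite the determinant as an alternant in the shifted parts $l_i$. Reversing the column order, the entries become $E_{l_i-(r-j)}(t)$. Since $E_m(t)-E_{m-1}(t)=t^m/m!$, I perform column operations, subtracting adjacent columns repeatedly as in a finite-difference reduction, and expand so that each column becomes $E_{l_i}(t)$ in one column and pure monomials $t^{l_i-k}/(l_i-k)!$ in the remaining columns. Concretely, I aim to show
\[
\det[E_{l_i-(r-j)}(t)]=\det\Bigl[\,E_{l_i-r+1}(t)\ \Big|\ \tfrac{t^{l_i-r+2}}{(l_i-r+2)!}\ \Big|\cdots\Bigr]
\]
up to sign and index bookkeeping. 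Then I expand along the $E$-column by Laplace expansion. The $(i)$-minor is $\det[t^{l_p-s}/(l_p-s)!]_{p\ne i}$. Pulling out $t^{l_p}/l_p!$ from each row leaves $\det[(l_p)_{s}\, t^{-s}]$ with falling factorials. Pulling out the $t^{-s}$ column factors and applying \Cref{lem:General_Vandermonde} to the falling-factorial polynomials (leading coefficient $1$) gives $\prod_{p<q,\,p,q\ne i}(l_p-l_q)/\prod_{p\ne i}l_p!$, with sign fixed by ordering. This yields $c_i$ times a power of $t$. I will do the exponent bookkeeping carefully: the $E$-index should reduce to $l_i-(r-1)-(\text{shift})=\ell-\delta_i$, and the total $t$-power to $n-\ell+\delta_i=d+\delta_i$. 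These can be checked using $\sum l_p=n+\binom r2$.

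Positivity of $c_i$ is immediate, since $l_1>l_2>\dots>l_r$ makes every factor $l_p-l_q$ with $p<q$ positive. For $c_1$, I will compare with the hook-length formula in the form $f^\mu=|\mu|!\prod_{p<q}(l'_p-l'_q)/\prod_p l'_p!$, applied to $\overline\lambda$ with $r-1$ parts. The shifted parts of $\overline\lambda$ are exactly $l_2,\dots,l_r$, so $c_1=f^{\overline\lambda}/d!$.

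The main obstacle is the column reduction. I need to get exactly one $E$-column, with index $\ell-\delta_i$ rather than something off by a row-dependent shift, and the correct alternating sign $(-1)^{i+1}$. If the direct column differencing gets messy, my fallback is to write $E_m(t)=\sum_{k}\frac{t^{k}}{k!}[k\le m]$. I would then expand the determinant multilinearly and use antisymmetry to kill all terms except those where the truncations in rows $p\ne i$ are inactive. This can be phrased as the statement that $\det$ of the untruncated exponential columns vanishes, leaving only the first-row-type truncation term.
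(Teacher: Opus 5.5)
Your proposal is correct and is essentially the paper's proof. Both use the same specialization $p_1\mapsto 1+t$, $p_k\mapsto 1$ ($k\ge 2$), which sends $s_\lambda$ to $\Phi_\lambda(1+t)$ and $h_m$ to $E_m(t)$, followed by Jacobi--Trudi, adjacent column differences, Laplace expansion along the $E$-column, the generalized Vandermonde determinant for the falling-factorial minor, and the Frobenius form of the hook-length formula for $c_1$. One small slip: no column reversal is needed, since $\lambda_i-i+j=l_i-(r-j)$ already, and a single pass $C_j\leftarrow C_j-C_{j-1}$ for $j=r,\dots,2$ gives exactly your displayed matrix with no sign change (your exponent $d+\delta_i$ is right; the paper's intermediate $t^{d-\delta_i}$ is a typo).
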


\begin{proof}

Define an algebra homomorphism $\theta_t:\Lambda_\Q\to \Q[t]$ as
$$\theta_t(p_1)=(1+t),\qquad \theta_t(p_k)=1\quad k\geq 2.$$
This means that $\theta_t$ preserves sums and products:
$$\theta_t(F+G)=\theta_t(F)+\theta_t(G)\quad \text{and}\quad \theta_t(F\,G)=\theta_t(F)\,\theta_t(G).$$
Therefore, for any $\rho\vdash n$, we have 
\begin{align*}
     \theta_t(p_\rho) = \theta_t(p_1)^{m_1(\rho)} \prod_{k \ge 2} \theta_t(p_k)^{m_k(\rho)} = (1+t)^{m_1(\rho)}.
\end{align*}
Now, since $$s_{\lambda}=\sum\limits_{\rho\vdash n}\frac{\chi^{\lambda}(\rho)}{z_{\rho}}p_{\rho},$$
applying $\theta_t$ both sides, we have
\begin{align}
\theta_t(s_{\lambda})
    &=\sum_{\rho\vdash n}\frac{\chi^{\lambda}(\rho)}{z_{\rho}}(1+t)^{m_1(\rho)}=\Phi_{\lambda}(1+t).\label{eqn:theta_t_schur}
\end{align}
Again, since generating function for the complete homogeneous symmetric functions is given by
$$\sum_{m\geq 0}h_mu^m=\exp\left(\sum_{k\geq 1}p_k \frac{u^k}{k}\right).$$

Applying $\theta_t$ both sides, we have
\begin{align*}
    \sum_{m\geq 0}\theta_t(h_m)u^m&=\exp\left((1+t)u+\sum_{k\geq 2}\frac{u^k}{k}\right)\\
    &=\frac{e^{tu}}{1-u}\\
    &=\sum_{m\geq 0}E_m(t)u^m.
\end{align*}
Comparing the coefficient of $u^m$ both sides, we get 
$$\theta_t(h_m)=E_m(t).$$
Now using Jacobi-Trudi determinant formula for $s_{\lambda}$ in \Cref{eqn:theta_t_schur}, we have   
\begin{align}
    \Phi_{\lambda}(1+t)
    &=\theta_t(\det\left[h_{\lambda_i -i+j}\right]_{i,j=1}^{r})=\det\left[E_{\lambda_i -i+j}(t)\right]_{i,j=1}^{r}.\nonumber
\end{align}
This proves the first part of the lemma. 

Set $k_i:=\lambda_i-i+1=\ell-\delta_i$. Using the column operation $C_j\longleftarrow C_j-C_{j-1}$ for $j=r,r-1,\ldots 2$, the $i$-th row of $M:=\left[E_{\lambda_i -i+j}(t)\right]_{i,j=1}^{r}$ becomes
$$
\left(
E_{k_i}(t),
\frac{t^{k_i+1}}{(k_i+1)!},
\ldots,
\frac{t^{k_i+r-1}}{(k_i+r-1)!}
\right).
$$

The minor corresponding to the $i$-th row and the first column is
\begin{align*}
    M_{i,1}&=t^{\,n-k_i}
\det
\left[
\frac{1}{(k_p+q)!}
\right]_{\substack{p\neq i\\1\leq q\leq r-1}}\\
&=t^{d-\delta_i}\frac{1}{\prod_{p\neq i}l_p!}\det [(l_p)_{r-q-1}]_{\substack{p\neq i\\1\leq q\leq r-1}},
\end{align*}
where $(m)_t=m(m-1)(m-2)\cdots(m-t+1)$ denotes the falling factorial.

Therefore, by \Cref{lem:General_Vandermonde}, we have 
$$M_{i,1}=t^{d-\delta_i}\frac{\prod_{\substack{p<q\\p,q\neq i}}(l_p-l_q)}{\prod_{p\neq i}l_p!}.$$

Thus, the Laplace expansion gives the second part of the theorem and $c_1=f^{\overline{\lambda}}/d!$ follows from the hook-length formula.
\end{proof}

To obtain bounds for the coefficients $c_i$ appearing in the above lemma, we will need a comparison between the dimensions of two partitions that differ by one box in the first row. The following bound provides the required comparison.

\begin{lemma}\label{lem:fTtau_bound_adding_one_box}
    let $\tau=(\tau_1,\tau_2,\ldots, \tau_r)\vdash n$ and $\tau^+=(\tau_1+1,\tau_2,\ldots, \tau_r)$ is obtained from $\tau$ by adding one box to its first row, then 
    $$\frac{f^{\tau^+}}{(n+1)f^{\tau}}\leq \frac{n}{\tau_1(\tau_1+1)}.$$
\end{lemma}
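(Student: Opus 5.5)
We will prove the bound directly from the Hook Length Formula by comparing the hook lengths of $\tau$ and $\tau^+$. Since $|\tau^+|=n+1$,
$$
\frac{f^{\tau^+}}{(n+1)f^{\tau}}
=\frac{(n+1)!/\prod_{u\in Y_{\tau^+}}h_{\tau^+}(u)}{(n+1)\,n!/\prod_{u\in Y_\tau}h_\tau(u)}
=\frac{\prod_{u\in Y_\tau}h_\tau(u)}{\prod_{u\in Y_{\tau^+}}h_{\tau^+}(u)}.
$$
So the task is to show that this ratio of hook products is at most $n/(\tau_1(\tau_1+1))$.

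\textbf{Which hooks change.} Adding the box at position $(1,\tau_1+1)$ changes only the hooks of cells in row $1$; all other hooks are unchanged. In row $1$, the new cell $(1,\tau_1+1)$ has hook length $1$, since nothing lies to its right and nothing lies below it because $\tau_2\le\tau_1$. Each old cell $(1,j)$ with $1\le j\le\tau_1$ has its hook length increased by exactly one. Writing $h_j:=h_\tau(1,j)$, the ratio therefore equals
$$
\prod_{j=1}^{\tau_1}\frac{h_j}{h_j+1}.
$$

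\textbf{Bounding the product.} Every factor $h_j/(h_j+1)$ is less than $1$, so the product is at most the product over any subset of indices. The hooks in row $1$ are strictly decreasing in $j$: moving one cell right removes at least that cell from the arm, and the leg can only shrink. Hence $h_j\ge \tau_1-j+1$, and the first cell has $h_1=\tau_1+(\ell(\tau)-1)\le n$.

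The remaining choice is which factors to keep. If we keep only the cells $j$ with $\tau_2<j\le\tau_1$, whose hooks are exactly $\tau_1-j+1$, the product telescopes to $1/(\tau_1-\tau_2+1)$. That bound is too weak on its own. A better approach uses all the factors and the fact that $h_j\mapsto h_j/(h_j+1)$ is increasing. The row-$1$ hooks are $\tau_1$ distinct integers lying in $[1,h_1]\subseteq[1,n]$. For a fixed maximum $h_1\le n$, the product $\prod h_j/(h_j+1)$ over $\tau_1$ distinct integers is largest when these integers are the top values $h_1,h_1-1,\dots,h_1-\tau_1+1$. In that case it telescopes to
$$
\frac{h_1-\tau_1+1}{h_1+1}\le\frac{n-\tau_1+1}{n+1}.
$$

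\textbf{Main obstacle.} The telescoped bound $(n-\tau_1+1)/(n+1)$ is of a different shape from the target $n/(\tau_1(\tau_1+1))$, and the two do not compare in general. For instance, when $\tau_1=1$ the first is $n/(n+1)$ while the target is $n/2$, so the needed inequality $(n-\tau_1+1)/(n+1)\le n/(\tau_1(\tau_1+1))$ holds there; but for large $\tau_1$ it fails. The hard step is therefore to combine the strong telescoping from the first-row-only cells with the global constraint that the hooks lie below $n$, and to produce exactly the quadratic denominator $\tau_1(\tau_1+1)$.

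I do not yet have an argument that does this. The next thing I would check is whether the statement holds at all, by testing it on a single row. For $\tau=(n)$ we have $f^{\tau}=f^{\tau^+}=1$, so the left side is $1/(n+1)$ while the right side is $n/(n(n+1))=1/(n+1)$. The two sides are equal, which supports the normalization $f^{\tau^+}/((n+1)f^\tau)$ and gives a sanity check for whatever combination argument is found.
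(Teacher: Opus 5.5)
Your reduction of the ratio to $\prod_{j=1}^{\tau_1} h_j/(h_j+1)$ is correct, and it is exactly the paper's starting point. However, the proposal stops there. As you acknowledge, neither of your two estimates implies the target, so the inequality is never proved.

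Both estimates are too weak because they replace the row-one hooks by inequalities: either only the cells beyond column $\tau_2$, or "distinct integers below $n$". This discards how the hooks depend on the shape, which is what produces the quadratic denominator. The missing idea is to use the exact value $h_j=\tau_1-j+\tau'_j$, where $\tau'_j$ is the height of column $j$. Group the columns by height: $\tau'_j=s$ exactly when $\tau_{s+1}<j\le\tau_s$, with $\tau_{r+1}=0$. Each block then telescopes to $(\tau_1-\tau_s+s)/(\tau_1-\tau_{s+1}+s)$, and multiplying the blocks gives the closed form
$$\frac{f^{\tau^+}}{(n+1)f^{\tau}}=\frac{1}{\tau_1+r}\prod_{s=2}^{r}\frac{\tau_1-\tau_s+s}{\tau_1-\tau_s+s-1}.$$

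From here the paper inducts on the number of rows $r$. Write $R(\tau)$ for the left-hand side and $\hat\tau=(\tau_1,\dots,\tau_{r-1})\vdash \hat n=n-\tau_r$. The closed form gives
$$\frac{R(\tau)}{R(\hat\tau)}=\frac{\tau_1+r-1}{\tau_1+r}\cdot\frac{\tau_1-\tau_r+r}{\tau_1-\tau_r+r-1}=1+\frac{\tau_r}{(\tau_1-\tau_r+r-1)(\tau_1+r)}\le 1+\frac{\tau_r}{\hat n}=\frac{n}{\hat n},$$
since $(\tau_1-\tau_r+r-1)(\tau_1+r)\ge (r-1)\tau_1\ge\hat n$.

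Your single-row computation, with equality $1/(n+1)$, is precisely the base case. Since $\hat\tau$ has the same first row, the hypothesis $R(\hat\tau)\le \hat n/(\tau_1(\tau_1+1))$ yields $R(\tau)\le n/(\tau_1(\tau_1+1))$. Without the column-height telescoping, or some equivalent exact evaluation of the product, your argument does not reach the statement.
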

\begin{proof}
    Let $H_j$ denote the hook length of the box $(1,j)$ in the first row of $\tau$. When one box is added to the end of the first row, each $H_j$ increases by 1, while all other existing hook lengths remain unchanged. The new box itself has hook length 1. Therefore, by the hook-length formula,
    $$\frac{f^{\tau^+}}{(n+1)f^{\tau}} =  \prod_{j=1}^{\tau_1}\frac{H_j}{H_j+1}.$$
Set $\tau_{r+1}=0$. For $\tau_{s+1}<j\leq \tau_s$, the $j^{th}$ column of $\tau$ has height $s$, that is, $\tau'_j=s$. Thus
$$\prod_{j=\tau_{s+1}+1}^{\tau_s}\frac{H_j}{H_j+1}=\prod_{j=\tau_{s+1}+1}^{\tau_s}\frac{\tau_1-j+s}{\tau_1-j+s+1}=\frac{\tau_1-\tau_s+s}{\tau_1-\tau_{s+1}+s}.$$
Therefore, we have 
\begin{equation}\label{eqn:RHS_EXPRS}
    \frac{f^{\tau^+}}{(n+1)f^{\tau}} =\frac{1}{\tau_1+r}\prod_{s=2}^{r}\frac{\tau_1-\tau_s+s}{\tau_1-\tau_{s}+s-1}.
\end{equation}
We now prove the bound by induction on the number $r$ of rows. 

For $r=1$, $\tau=(\tau_1)=(n)$ and $\tau^+=(\tau_1+1)$, therefore
$$\frac{f^{(\tau_1+1)}}{(n+1)f^{(\tau_1)}}=\frac{1}{(n+1)}=\frac{n}{\tau_1(\tau_1+1)}.$$
Assume $r\geq 2$, and let $\hat{\tau}:=(\tau_1,\tau_2,\ldots, \tau_{r-1})$ and $\hat{n}:=n-\tau_r$. 
Write $$R(\tau)=\frac{f^{\tau^+}}{(|\tau|+1)f^{\tau}}.$$
By using \Cref{eqn:RHS_EXPRS}, we obtain
\begin{align*}
  \frac{R(\tau)}{R(\hat{\tau})}
  &\leq 1+\frac{\tau_r}{\hat{n}}=\,\frac{n}{\hat{n}}.
\end{align*}
Therefore, using the induction hypothesis, we have 
$$R({\tau})\leq \frac{n}{\hat{n}}\frac{\hat{n}}{\tau_1(\tau_1+1)}=\frac{{n}}{\tau_1(\tau_1+1)}.$$
\end{proof}

We now apply the preceding dimension comparison to obtain bounds on the ratios $c_i/c_1$.  The following result provides suitable bounds. 

\begin{lemma}\label{lem:bound for c_i/c_1}
    With the notation of $\Cref{lem:Jacobu-Trudi}$, for $i\geq 2$, let
    $$\tau^i=(\lambda_i,\lambda_{i+1},\ldots,\lambda_r), \qquad N_i=|\tau^i|$$
    and let $(\tau^i)^+$ be obtained from $\tau^i$ by adding one box to its first row. Then
    $$\frac{c_i}{c_1}\leq \binom{\delta_i-1}{i-2}\left(\frac{f^{(\tau^i)^+}}{(N_i+1)f^{\tau^i}}\right)^{\delta_i}.$$
    Consequently, if $0<\delta_i\leq \ell/2$
    $$\frac{c_i}{c_1}\leq \left(\frac{8n}{\ell^2}\right)^{\delta_i}.$$
    Furthermore, for every $i\geq 2,$
    $$\frac{c_i}{c_1}\leq \frac{2^{\delta_i}}{\delta_i!}.$$
\end{lemma}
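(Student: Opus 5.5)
The plan is to compute the ratio $c_i/c_1$ explicitly from the formula in \Cref{lem:Jacobu-Trudi} and identify it with a product of the ratios appearing in \Cref{lem:fTtau_bound_adding_one_box}. In the definition of $c_i$ the numerator omits the index $i$ and the factorials run over $p\neq i$. Therefore almost everything cancels, and we get
$$
\frac{c_i}{c_1}=\frac{l_i!}{l_1!}\prod_{q\neq 1,i}\frac{l_1-l_q}{|l_i-l_q|}
=\Bigg(\prod_{1<q<i}\frac{\delta_q}{\delta_i-\delta_q}\Bigg)\cdot\Bigg(\frac{l_i!}{l_1!}\prod_{q>i}\frac{l_1-l_q}{l_i-l_q}\Bigg),
$$
using $l_1-l_q=\delta_q$ and $l_q-l_i=\delta_i-\delta_q$ for $q<i$. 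Write $D:=\delta_i$. We then treat the two brackets separately.

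\emph{First bracket.} The numbers $\delta_2<\delta_3<\cdots<\delta_{i-1}$ are $i-2$ distinct integers in $[1,D-1]$. The function $x\mapsto x/(D-x)$ is increasing, so the product is maximized when these integers are $D-1,D-2,\dots,D-i+2$. This maximum equals $\frac{(D-1)\cdots(D-i+2)}{(i-2)!}=\binom{D-1}{i-2}$.

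\emph{Second bracket.} Let $\tau^{i,k}$ denote $\tau^i$ with $k$ boxes added to its first row, and put $R_k:=f^{\tau^{i,k}}/\big((N_i+k)f^{\tau^{i,k-1}}\big)$. By \eqref{eqn:RHS_EXPRS} applied to $\tau^{i,k-1}$, which has $r-i+1$ rows and first row $\lambda_i+k-1$, we have
$$R_k=\frac{1}{l_i+k}\prod_{q>i}\frac{l_i+k-l_q}{l_i+k-1-l_q}.$$
The product over $k=1,\dots,D$ telescopes to $\frac{l_i!}{l_1!}\prod_{q>i}\frac{l_1-l_q}{l_i-l_q}$, which is exactly the second bracket. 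Moreover every factor in $R_k$ is decreasing in $k$, so $R_k\le R_1=f^{(\tau^i)^+}/\big((N_i+1)f^{\tau^i}\big)$. Combining the two brackets gives the first inequality of the lemma.

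\emph{Second inequality.} Since $\delta_i=\ell-\lambda_i+i-1\ge \ell-\lambda_i$, the hypothesis $\delta_i\le \ell/2$ forces $\lambda_i\ge \ell/2$. Then \Cref{lem:fTtau_bound_adding_one_box} gives $R_1\le N_i/(\lambda_i(\lambda_i+1))\le n/\lambda_i^2\le 4n/\ell^2$. Together with $\binom{D-1}{i-2}\le 2^{D-1}$, this yields $(8n/\ell^2)^{\delta_i}$.

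\emph{Third inequality.} Here we do not bound each $R_k$ by $R_1$; instead we use the exact telescoped product
$$\prod_{k=1}^D R_k=\frac{f^{\tau^{i,D}}}{(N_i+1)\cdots(N_i+D)\,f^{\tau^i}}.$$
The shape $\tau^{i,D}/\tau^i$ is a horizontal $D$-strip. By the Pieri/induction dimension count used in the proof of \Cref{lem:A_lambda_j bound}, $f^{\tau^{i,D}}\le\binom{N_i+D}{D}f^{\tau^i}$. Hence the product is at most $1/D!$, and multiplying by $\binom{D-1}{i-2}\le 2^{D}$ gives $2^{\delta_i}/\delta_i!$.

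The main point needing care is the telescoping identification of the second bracket with $\prod_k R_k$ via \eqref{eqn:RHS_EXPRS}: one must check that the row indices $s$ of $\tau^{i,k-1}$ correspond to $q=i+s-1$ with $\tau_1-\tau_s+s=l_i+k-l_q$. The monotonicity of $R_k$ in $k$ is then immediate.
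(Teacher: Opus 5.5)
Your proof is correct. Its skeleton matches the paper's:
\begin{itemize}
\item the same cancellation giving $c_i/c_1=\frac{l_i!}{l_1!}\prod_{1<q<i}\frac{l_1-l_q}{l_q-l_i}\prod_{q>i}\frac{l_1-l_q}{l_i-l_q}$;
\item the same monotonicity argument for the first bracket, bounded by $\binom{\delta_i-1}{i-2}$;
\item the same route to $\lambda_i\ge\ell/2$ and $R_1\le 4n/\ell^2$ for the second inequality.
\end{itemize}
The differences are all in how you handle the second bracket.

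\emph{First inequality.} The paper writes the bracket as $\frac{1}{(l_i+1)\cdots(l_i+\delta_i)}\prod_{j>i}(1+\delta_i/b_j)$ with $b_j=l_i-l_j$. It then bounds it factor by factor, using $(l_i+1)^{-\delta_i}$ and the Bernoulli-type estimate $1+\delta_i/b\le(1+1/b)^{\delta_i}$. Your telescoping $\prod_{k}R_k$ with $R_k\le R_1$ is the same estimate reorganized, since $\prod_{k=1}^{\delta_i}\frac{b+k}{b+k-1}=1+\delta_i/b$ and each factor is at most $1+1/b$.

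\emph{Third inequality.} Here the routes genuinely differ.
\begin{itemize}
\item The paper stays elementary. The $b_j$ are distinct integers in $[1,l_i]$, so $\prod_{j>i}(1+\delta_i/b_j)\le\prod_{b=1}^{l_i}(1+\delta_i/b)=\binom{l_1}{\delta_i}$. This cancels against $l_i!/l_1!$ to give $1/\delta_i!$.
\item You use the exact identity of the bracket with $f^{\tau^{i,\delta_i}}/\bigl((N_i+1)\cdots(N_i+\delta_i)f^{\tau^i}\bigr)$. You then apply the Pieri/induced-dimension bound $f^{\tau^{i,\delta_i}}\le\binom{N_i+\delta_i}{\delta_i}f^{\tau^i}$, which holds because $\chi^{\tau^{i,\delta_i}}$ is one constituent of $\Ind\bigl(\chi^{\tau^i}\otimes\chi^{(\delta_i)}\bigr)$.
\end{itemize}

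\emph{What each buys.} Your version explains conceptually where $1/\delta_i!$ comes from: the bracket is itself a normalized dimension ratio, and the bound is just ``one constituent is no larger than the whole induced module.'' It also reuses machinery the paper already set up in \Cref{lem:A_lambda_j bound}. The paper's version is self-contained arithmetic and needs no representation theory at this step. Both arguments actually give the slightly sharper $2^{\delta_i-1}/\delta_i!$ if you keep $\binom{\delta_i-1}{i-2}\le 2^{\delta_i-1}$ rather than $2^{\delta_i}$.
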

\begin{proof}
 Fix $i\geq 2$, and write $\delta_i=l_1-l_i$.  From the definition of $c_i$'s, we have
 \begin{equation}\label{eq:c_i/c_1}
     \frac{c_i}{c_1}=\frac{l_i!}{l_1!}\left(\prod_{j=2}^{i-1}\frac{l_1-l_j}{l_j-l_i}\right)\left(\prod_{j=i+1}^{r}\frac{l_1-l_j}{l_i-l_j}\right)
 \end{equation}
 For $2\leq j\leq i-1$, set $a_j:=l_1-l_j$. Note that $l_1>l_j>l_i$, this implies $0<a_j<l_1-l_i=\delta_i$. As $l_j$'s are distinct integers, therefore $a_j$'s are distinct integers in $\{1,2,\ldots,\delta_i-1\}$. Now consider
 the first product
 $$\prod_{j=2}^{i-1}\frac{l_1-l_j}{l_j-l_i}=\prod_{j=2}^{i-1}\frac{a_j}{\delta_i-a_j}.$$
 The function $a\mapsto a/(\delta_i-a)$ is increasing. Therefore the product is
maximized by choosing the largest $i-2$ possible values of $a_j$, and hence
\begin{equation}\label{eq:first-product}
 \prod_{j=2}^{i-1}\frac{l_1-l_j}{l_j-l_i}
 \le \prod_{k=1}^{i-2}\frac{\delta_i-k}{\delta_i-(\delta_i-k)}
 =\binom{\delta_i-1}{i-2}.
\end{equation}

Now, for $i+1\leq j\leq r$, set $b_j:=l_i-l_j$. Using \Cref{eqn:RHS_EXPRS} of \Cref{lem:fTtau_bound_adding_one_box} for $\tau^i$, we have
\begin{align*}
    \frac{f^{(\tau^i)^+}}{(N_i+1)f^{\tau^i}}&=\frac{1}{\tau^i_1+r-i+1}\prod_{s=2}^{r-i+1}\left(1+\frac{1}{\tau^i_1-\tau^i_s+s-1}\right)\\
    &=\frac{1}{l_i+1}\prod_{j=i+1}^{r}\left(1+\frac{1}{b_j}\right).
\end{align*}

Now consider,
\begin{align}
    \frac{l_i!}{l_1!}\left(\prod_{j=i+1}^{r}\frac{l_1-l_j}{l_i-l_j}\right)&=\frac{1}{(l_i+1)(l_i+2)\cdots(l_i+\delta_i)}\prod_{j=i+1}^{r}\left(1+\frac{\delta_i}{b_j}\right)\nonumber\\
   &\le\left(\frac{f^{(\tau^i)^+}}{(N_i+1)f^{\tau^i}}\right)^{\delta_i}\label{eq:second-product}
\end{align}

From \Cref{eq:first-product,eq:second-product,eq:c_i/c_1}, we have 
$$\frac{c_i}{c_1}\leq \binom{\delta_i-1}{i-2}\left(\frac{f^{(\tau^i)^+}}{(N_i+1)f^{\tau^i}}\right)^{\delta_i}.$$

Now, suppose that $0<\delta_i\leq \ell/2$. Since $\lambda_i=\ell+i-1-\delta_i\geq \ell-\delta_i\geq \ell/2$, therefore by \Cref{lem:fTtau_bound_adding_one_box}, we have 
$$\frac{f^{(\tau^i)^+}}{(N_i+1)f^{\tau^i}}\leq \frac{N_i}{\lambda_i^2}\leq \frac{4n}{\ell^2}.$$
Therefore, for $0<\delta_i\leq \ell/2$, we have 
$$\frac{c_i}{c_1}\leq 2^{\delta_i-1}\left(\frac{4n}{\ell^2}\right)^{\delta_i}\leq \left(\frac{8n}{\ell^2}\right)^{\delta_i}.$$
For the universal estimate, the numbers $b_j=l_i-l_j$, $j>i$, are distinct
positive integers not exceeding $l_i$. Hence
$$
 \prod_{j=i+1}^r\left(1+\frac{\delta_i}{b_j}\right)
 \le\prod_{b=1}^{l_i}\left(1+\frac{\delta_i}{b}\right)
 =\binom{l_1}{\delta_i}.
$$
Therefore, 
$$
 \frac{c_i}{c_1}
 \le2^{\delta_i-1}\frac{l_i!}{l_1!}\binom{l_1}{\delta_i}
 \le\frac{2^{\delta_i}}{\delta_i!},
$$
for every $i\geq2$.
 \end{proof}

\subsection{An Optimization Bound}

The following theorem gives the key bound that will be used in the proof of our main result for even $n$.

\begin{theorem}\label{thm:z_upper_bound}
Let 
$
\mathcal{O}_n = \{ \rho \vdash n : m_1(\rho) \text{ is odd} \} 
$ be a subset of partitions with odd number of fixed points.
Suppose $\{x_\lambda\}$ is a set of non-negative weights indexed by partitions $\lambda \vdash n$ with $\lambda \neq (n)$, such that
$$
\sum_{\substack{\lambda \vdash n \\ \lambda \neq (n)}} x_\lambda = 1.
$$
Furthermore, assume that for every $\rho \in \mathcal{O}_n$, a real number $z$ satisfies the bound
$$
z \le -\sum_{\substack{\lambda \vdash n \\ \lambda \neq (n)}} x_\lambda \frac{\chi^\lambda(\rho)}{f^\lambda}.
$$
Then, there exist absolute constants $C>0$ and $n_0$ such that for all even $n \ge n_0$,
$$
z \le \frac{\exp{\left(\frac{n}{2}+Cn^{3/4}\log n\right)}}{(n-1)!!}.
$$
Consequently, for sufficiently large $n$
$$
z \le \frac{e^{\frac{n}{2}+o(n)}}{(n-1)!!}.
$$
\end{theorem}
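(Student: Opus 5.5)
The plan is to average the hypothesis over $\rho\in\mathcal{O}_n$ with weights tied to the fixed-point transform $\Phi_\lambda$. For a parameter $t>0$, give the class $\rho\in\mathcal{O}_n$ the weight $w_\rho:=t^{m_1(\rho)}/z_\rho$. Multiply each inequality $z\le -\sum_\lambda x_\lambda\chi^\lambda(\rho)/f^\lambda$ by $w_\rho$ and sum. Since $\sum_\lambda x_\lambda=1$ and $x_\lambda\ge 0$, this gives
$$
z\;\le\;\max_{\lambda\ne(n)}\;\frac{-\sum_{\rho\in\mathcal{O}_n}w_\rho\chi^\lambda(\rho)}{f^\lambda\sum_{\rho\in\mathcal{O}_n}w_\rho}.
$$
Extracting the odd part of $t^{m_1}$ gives $\sum_{\rho\in\mathcal{O}_n}w_\rho\chi^\lambda(\rho)=\tfrac12\bigl(\Phi_\lambda(t)-\Phi_\lambda(-t)\bigr)$. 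Since $\chi^{(n)}\equiv 1$ and, by \Cref{lem:Jacobu-Trudi}, $\Phi_{(n)}(1+s)=E_n(s)$, the normalizer is $\tfrac12\bigl(E_n(t-1)-E_n(-t-1)\bigr)$.

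I would take $t=\sqrt n$, so that $t-1=Y$ and $-t-1=-X$ in the notation of the truncated-exponential lemma. Because $n\ge 4X$ for large $n$, that lemma gives a denominator of $\tfrac12 e^{Y}(1+o(1))$. Hence
$$
z\le (2+o(1))\,e^{-Y}\max_{\lambda\ne(n)}\frac{|\Phi_\lambda(1+Y)|+|\Phi_\lambda(1-X)|}{f^\lambda}.
$$

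Next I bound $|\Phi_\lambda(1+s)|$ for $|s|\le X$ using \Cref{lem:Phi_lambda_series_expansion} together with \Cref{lem:A_lambda_j bound}. The terms with $j<d$ vanish and $0\le A_{\lambda,j}\le\binom{j}{d}f^{\overline\lambda}$, so
$$
|\Phi_\lambda(1+s)|\le f^{\overline\lambda}\sum_{j\ge d}\frac{|s|^j}{j!}\binom{j}{d}=f^{\overline\lambda}\frac{|s|^d}{d!}e^{|s|}\le f^{\overline\lambda}\frac{X^d}{d!}e^{X}.
$$
By the branching rule, restricting $S^\lambda$ to $S_d\times S_\ell$ contains $S^{\overline\lambda}\otimes\mathbf 1$, so $f^\lambda\ge f^{\overline\lambda}$. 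Therefore
$$
z\le 4e^{X-Y+o(1)}\max_{1\le d\le n-1}\frac{X^d}{d!}=O(1)\cdot\max_{d\le n-1}\frac{(\sqrt n+1)^d}{d!}.
$$
The ratio of consecutive terms is $(\sqrt n+1)/(d+1)$. It is below $1$ once $d\ge\sqrt n$, so the maximum occurs at $d\approx\sqrt n$ and is about $e^{\sqrt n+1}$.

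This last step exposes the main obstacle. The bound $e^{O(\sqrt n)}$ is enormous, whereas the target $e^{n/2+o(n)}/(n-1)!!\approx (e^2/n)^{n/2}$ is tiny. So the crude estimate only works when $d$ is close to $n$, where $X^d/d!\approx e^n n^{-n/2}$ matches the target up to $e^{O(\sqrt n)}$. For small and moderate $d$ I must not discard $f^\lambda$. I would instead compare against $f^\lambda\approx\binom{n}{d}f^{\overline\lambda}$, up to hook corrections, and split into two regimes.

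In the regime $\ell$ large, say $\ell\ge n^{3/4}$, I would use the Jacobi--Trudi expansion of \Cref{lem:Jacobu-Trudi}. The main term is $c_1 s^dE_{\ell}(s)$ with $c_1=f^{\overline\lambda}/d!$. The other terms are controlled relative to it by \Cref{lem:bound for c_i/c_1}: the factor $(8n/\ell^2)^{\delta_i}$ when $\delta_i\le\ell/2$, and $2^{\delta_i}/\delta_i!$ otherwise. Comparing with $f^\lambda=n!\,f^{\overline\lambda}/\bigl(d!\,\prod_{\text{first row}}h\bigr)$ should give a ratio of order $n^{d/2}\ell!\,(\text{hooks})/n!$, which is smaller than the target. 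In the regime $\ell<n^{3/4}$, I would keep the series bound but use a lower bound on $f^\lambda/f^{\overline\lambda}$ coming from the first-row hooks, $\prod_j h(1,j)\le (\ell+r)^\ell$. The loss here is $\exp(O(\ell\log n))=\exp(O(n^{3/4}\log n))$, which I expect to be the source of the $Cn^{3/4}\log n$ term.

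Finally I would check that, uniformly in $d$, the resulting bound is at most $n^{d/2}(\ell!/n!)\,e^{O(n^{3/4}\log n)}$, and that this is maximized near $d=n$. Applying $d!\,\ell!\le n!$ and Stirling's estimate $(n-1)!!\approx\sqrt2\,(n/e)^{n/2}$ then yields the stated $e^{n/2+Cn^{3/4}\log n}/(n-1)!!$.
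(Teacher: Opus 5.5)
There is a genuine gap, and it is at the point where you replace $\Phi_\lambda(a)-\Phi_\lambda(-a)$ by $|\Phi_\lambda(1+Y)|+|\Phi_\lambda(1-X)|$. Up to there your argument matches the paper: the averaging with weights $a^{m_1(\rho)}/z_\rho$, $a=\sqrt n$; the normalizer $\tfrac12 e^{Y}(1+o(1))$; the series bound via $A_{\lambda,j}\le\binom{j}{d}f^{\overline\lambda}$; and $f^\lambda\ge f^{\overline\lambda}$.

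After that step you are bounding $\bigl|\E_{\pi_a}[\chi^\lambda/f^\lambda]\bigr|$, and for $\lambda$ with a long first row this quantity is not small. Take $\lambda=(n-1,1)$. Then $\chi^\lambda(\rho)=m_1(\rho)-1$, and under your weights $m_1$ is essentially Poisson of mean $\sqrt n$ conditioned to be odd. Equivalently, $\Phi_{(n-1,1)}(1+t)=(1+t)E_{n-1}(t)-E_n(t)\approx te^t$. This gives $\E_{\pi_a}[\chi^\lambda/f^\lambda]\approx 1/(\sqrt n+1)$, whereas the target is roughly $(e^2/n)^{n/2}$. So no sharper comparison with $f^\lambda$ can rescue the large-$\ell$ regime. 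Your own expression $n^{d/2}\ell!/n!$ shows the same thing: it equals $n^{-1/2}$ at $d=1$, and its consecutive ratio $\sqrt n/(n-d)$ is below $1$ until $d\approx n-\sqrt n$. It is therefore maximized at $d=1$, not near $d=n$, and it is far above the target for small $d$.

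The missing idea is that for $\ell\ge C_0n^{3/4}$ you must prove $q_\lambda\le 0$, i.e.\ $\Phi_\lambda(-a)\le\Phi_\lambda(a)$, rather than bound $|q_\lambda|$. The paper does this with the Jacobi--Trudi expansion at both $t=Y$ and $t=-X$, together with the bounds on $c_i/c_1$. The requirement $8nX/\ell^2<1$ there is what forces $\ell\gtrsim n^{3/4}$. This gives
$\Phi_\lambda(a)=c_1Y^dE_\ell(Y)(1+\epsilon^+)$ and $\Phi_\lambda(-a)=(-1)^dc_1X^dE_\ell(-X)(1+\epsilon^-)$ with $|\epsilon^\pm|\le\tfrac14$.
\begin{itemize}
\item For odd $d$, the two values have opposite signs, so $q_\lambda<0$.
\item For even $d$, their ratio is at most a constant times $(X/Y)^de^{-2a}\le\exp\bigl(-2(\ell-a)/(a-1)\bigr)$, which is $o(1)$.
\end{itemize}

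A magnitude bound is needed only for $\ell<C_0n^{3/4}$. There your series estimate with the crude $f^\lambda\ge f^{\overline\lambda}$ already gives $O(X^d/d!)$ with $d\ge n-C_0n^{3/4}$, which is $e^{n/2+O(n^{3/4}\log n)}/(n-1)!!$. The first-row hook refinement is therefore unnecessary.
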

\begin{proof}
    Let \(\pi\) be any probability distribution supported on \(\OO_n\).
Averaging the given inequalities against \(\pi\) gives
\begin{align}
z
&\le
-\sum_{\rho\in\OO_n}\pi(\rho)
\sum_{\substack{\lambda\vdash n\\ \lambda\ne(n)}}
x_\lambda
\frac{\chi^\lambda(\rho)}{f^\lambda}
\nonumber\\
&\le
\max_{\substack{\lambda\vdash n\\ \lambda\ne(n)}}
\left(
-\sum_{\rho\in\OO_n}
\pi(\rho)\frac{\chi^\lambda(\rho)}{f^\lambda}
\right)= \max_{\substack{\lambda\vdash n\\ \lambda\ne(n)}}\left(-\E_{\pi}\left[\frac{\chi^{\lambda}}{f^\lambda}\right]\right).\label{eq:z_upper_exprssn}
\end{align}
Now, we define a probability measure on $\OO_n$ as follows:

Fix $a=\sqrt{n}$, $X=a+1$ and $Y=a-1$. Now, we define 
$$Z_a:=\sum_{\rho\in \OO_n}\frac{a^{m_1(\rho)}}{z_{\rho}}$$
and the probability measure 
$$\pi_{a}(\rho):=\frac{a^{m_1(\rho)}}{z_{\rho}Z_{a}},\qquad \text{ for every  }\rho\in \OO_n.$$

By the definition of $\Phi_\lambda$, we have
$$\sum_{\rho\in\OO_n}
\frac{a^{m_1(\rho)}}{z_\rho}\chi^\lambda(\rho)
=
\frac{\Phi_\lambda(a)-\Phi_\lambda(-a)}{2},$$
and consequently,
\begin{equation}
\label{eq:Fourier-coefficient}
\E_{\pi_a}
\left[
\frac{\chi^\lambda}{f^\lambda}
\right]
=
\frac{\Phi_\lambda(a)-\Phi_\lambda(-a)}
{2Z_af^\lambda}.\nonumber
\end{equation}

Define $q_{\lambda}:=-\E_{\pi_a}
\left[
\frac{\chi^\lambda}{f^\lambda}
\right]$, therefore by \Cref{eq:z_upper_exprssn}, we have 
$$z\leq \max_{\substack{\lambda\vdash n\\ \lambda\ne(n)}}\;q_{\lambda}.$$

\begin{claim}\label{claim:Z_a Lower Bound}
    For sufficiently large $n$, $Z_a\geq \frac{1}{4}e^{a-1}.$
\end{claim}
\textit{Proof of $\cref{claim:Z_a Lower Bound}$}: 
First note that 
\begin{align*}
    2Z_a&=\Phi_{(n)}(a)-\Phi_{(n)}(-a)\\
        &= E_n(Y)-E_n(-X).
\end{align*}
Since $ E_n(Y)=e^Y(1+o(1))$ and $E_n(-X)=e^{-X}(1+o(1))
$, we have 
\begin{align*}
    2Z_a=(e^Y - e^{-X})(1+o(1))= e^Y(1+o(1)).
\end{align*}
Therefore $Z_a\geq \frac{1}{4}e^{a-1}$ for sufficiently large $n$.

\begin{claim}\label{claim:bound on lambda_1}
    There are absolute constants $C_0>0$ and $n_1$ such that for any $n\geq n_1$, $$q_{\lambda}>0 \implies\lambda_1<C_0\,n^{3/4}.$$
\end{claim}
\textit{Proof of $\cref{claim:bound on lambda_1}$}: Let $\ell:=\lambda_1$, $d:=n-\ell$ and suppose that $$\ell\geq C_0\,n^{3/4}.$$

Recall that, 
$$q_{\lambda}=\frac{\Phi_\lambda(-a)-\Phi_\lambda(a)}
{2Z_af^\lambda}.$$
Since $Z_a>0$ and $f^\lambda>0$, therefore
$$q_{\lambda}<0\iff \Phi_\lambda(-a)<\Phi_\lambda(a).$$
Now, from \Cref{lem:Jacobu-Trudi}, we have 
\begin{align*}
    \Phi_\lambda(a)
&=
\sum_{i=1}^{r}
(-1)^{i+1}\,c_i\,
Y^{d+\delta_i}\,
E_{\ell-\delta_i}(Y)\\
&=c_1Y^dE_{\ell}(Y)\left(1+\sum_{i=2}^r\frac{(-1)^{i+1}c_iY^{d+\delta_i}E_{\ell-\delta_i}(Y)}{c_1Y^{d}E_{\ell}(Y)}\right);
\quad\text{and} \\
\Phi_\lambda(-a)
&=
\sum_{i=1}^{r}
(-1)^{i+1+d+\delta_i}\,c_i\,
X^{d+\delta_i}\,
E_{\ell-\delta_i}(-X)\\
&=(-1)^d c_1X^dE_{\ell}(-X)\left(1+\sum_{i=2}^r\frac{(-1)^{i+1+\delta_i}c_iX^{d+\delta_i}E_{\ell-\delta_i}(-X)}{c_1X^{d}E_{\ell}(-X)}\right).
\end{align*}

Define,
\begin{align*}
    T_{a}&:=\sum_{i=2}^r\frac{(-1)^{i+1}c_iY^{d+\delta_i}E_{\ell-\delta_i}(Y)}{c_1Y^{d}E_{\ell}(Y)};\qquad \text{and}\\
    T_{-a}&:=\sum_{i=2}^r\frac{(-1)^{i+1+\delta_i}c_iX^{d+\delta_i}E_{\ell-\delta_i}(-X)}{c_1X^{d}E_{\ell}(-X)}.
\end{align*}
We want to estimate $|T_a|$ and $|T_{-a}|$. First note that, for $0\leq h\leq \ell/2$, we have $\ell-h\geq \ell/2$, and consequently $E_{\ell-h}
(-X)=e^{-X}(1+o(1))$. Therefore
$$\left|\frac{E_{\ell-h}(-X)}{E_{\ell}(-X)}\right|=1+o(1)\leq 2\qquad \text{ for }0\leq h\leq \ell/2.$$
Since $\delta_i$ is strictly increasing, there is at most one index $i$ for each value of $\delta_i$. Therefore, by using the monotonicity of $E_m(Y)$ and the bound for $c_i/c_1$ from \Cref{lem:bound for c_i/c_1}, we have 
\begin{align*}
    |T_a|&\leq \sum_{0<\delta_i\leq \ell/2} \frac{c_iY^{d+\delta_i}E_{\ell-\delta_i}(Y)}{c_1Y^{d}E_{\ell}(Y)} + \sum_{\delta_i>\ell/2} \frac{c_iY^{d+\delta_i}E_{\ell-\delta_i}(Y)}{c_1Y^{d}E_{\ell}(Y)}\\
    &\leq \sum_{h\geq 1}\left(\frac{8nY}{\ell^2}\right)^h+\sum_{h>\ell/2}\frac{(2Y)^h}{h!}.
\end{align*}
Since $\ell\geq C_0n^{3/4}$, we can choose $C_0$ sufficiently large such that $8nY/\ell^2$ becomes less than 1 and the first sum converges as a geometric series to a small number. Also note that since $h\geq \ell/2$, the second sum is $o(1)$. Therefore, for sufficiently large $n$, we have $|T_{a}|\leq 1/4$.

Similarly, note that $|E_m(-X)|\leq e^X$ for $m\geq 0$, and $E_\ell(-X)\geq \frac{1}{2}e^{-X}$ for all sufficiently large $n$. Therefore, by \Cref{lem:bound for c_i/c_1}, we have
\begin{align*}
    |T_{-a}|&\leq \sum_{0<\delta_i\leq \ell/2} \frac{c_iX^{d+\delta_i}|E_{\ell-\delta_i}(-X)|}{c_1X^{d}E_{\ell}(-X)} + \sum_{\delta_i>\ell/2} \frac{c_iX^{d+\delta_i}|E_{\ell-\delta_i}(-X)|}{c_1X^{d}E_{\ell}(-X)}\\
    &\leq 2\sum_{h\geq 1}\left(\frac{8nX}{\ell^2}\right)^h+2e^{2X}\sum_{h>\ell/2}\frac{(2X)^h}{h!}.
\end{align*}
Put $K:=\lfloor\ell/2\rfloor+1$. For sufficiently large $n$, $K\geq 4X$, thus
\begin{align*}
    2e^{2X}\sum_{h>\ell/2}\frac{(2X)^h}{h!}\leq 4e^{2X}\frac{(2X)^K}{K!}\leq 4e^{2X}\left(\frac{2eX}{K}\right)^K\leq 4\exp\left(2X-\frac{C_0}{16}n^{3/4}\log n\right)=o(1).
\end{align*}
Therefore, for sufficiently large $n$, we have $|T_{-a}|\leq 1/4$.

Therefore, for sufficiently large $n$,
\begin{align}
    \Phi_{\lambda}(a)&=c_1Y^dE_{\ell}(Y)\left(1+\epsilon^+\right)\quad \text{and}\label{eq:phi_a}\\
    \Phi_{\lambda}(-a)&=(-1)^{d}c_1X^dE_{\ell}(-X)\left(1+\epsilon^-\right). \label{eq:phi_ma}  
\end{align}
Where $|\epsilon^+|\leq 1/4$ and $|\epsilon^-|\leq 1/4$.

If $d$ is odd, then by \Cref{eq:phi_a,eq:phi_ma}, $\Phi_{\lambda}(a)>0$ and $\Phi_{\lambda}(-a)<0$, therefore $q_{\lambda}<0$. Suppose now that $d$ is even. Again by  \Cref{eq:phi_a,eq:phi_ma}, we have
\begin{align*}
    \frac{\Phi_{\lambda}(-a)}{\Phi_{\lambda}(a)}&\leq \frac{5}{3}\left(\frac{X}{Y}\right)^d\frac{E_\ell(-X)}{E_\ell(Y)}\\
    &=\frac{5}{3}(1+o(1))\exp\left(d\log \frac{X}{Y}-2a\right)\\
    &\leq \frac{5}{3}(1+o(1))\exp\left(-\frac{2(\ell-a)}{a-1}\right)=o(1).
\end{align*}
Therefore, for sufficiently large $n$, $q_{\lambda}<0$. We have shown that every \(\lambda\) satisfying
\(\lambda_1\ge C_0n^{3/4}\) has \(q_\lambda\le0\), which proves the \Cref{claim:bound on lambda_1}.

Since, we are interested in the upper bound, we estimate $q_\lambda$ when it is positive. Therefore, by \Cref{claim:bound on lambda_1}, we can assume that $d=n-\ell=n-O(n^{3/4})$. Since $f^{\overline{\lambda}}\leq f^{\lambda}$ and $\Phi_{\lambda}(t)\geq 0$ when $t>1$, therefore, by the \Cref{lem:A_lambda_j bound,lem:Phi_lambda_series_expansion,claim:Z_a Lower Bound} and the  definition of $q_\lambda$, we have 
\begin{align*}
    q_\lambda&\leq \frac{\Phi_{\lambda}(-a)}{2Z_a f^{\lambda}} \leq \frac{f^{\overline{\lambda}}X^de^X/d!}{(e^{a-1}/2)f^{\lambda}} \leq 2e^2\frac{X^d}{d!}.
\end{align*}
Now recall that $X=\sqrt{n}+1$, and $d=n-O(n^{3/4})$. Substituting in the above inequality we get the result.
\end{proof}

\section{Even-Intersecting Families of Permutations}
\label{sec:Even_intersecting_family}
\subsection{The Case of Even  \texorpdfstring{$n$}{n}}
\label{subsec:Even_intersecting_family_even_n}

Throughout this subsection we consider $n$ to be even. Let
$\mathcal{F}\subseteq S_n$ be an even-intersecting family, that is, $|\Fix(\sigma \,\tau^{-1})|$ is even for  $\sigma,\tau\in \mathcal{F}$. Now, consider the following subset of $S_n$
$$
O_n
:=
\{\sigma\in S_n: |\Fix(\sigma)|\text{ is odd}\}.
$$
 Now, we consider the normal Cayley graph
$$
\Gamma:=\operatorname{Cay}(S_n,O_n).
$$ Thus,
$$
\sigma\sim\tau
\quad\Longleftrightarrow\quad
|\Fix(\sigma\tau^{-1})|\text{ is odd}.
$$
Consequently, a family of permutations is even-intersecting if and only if it
is an independent set in $\Gamma$. Therefore,
$$
M(n)=\alpha(\Gamma).
$$

Hence, the problem is reduced to finding an upper bound for the independence number of
$\Gamma$. We want to use weighted Hoffman bound to get an upper bound, and for that we seek a weighted adjacency
matrix for which the least eigenvalue is as large as possible while fixing the largest eigenvalue to be 1.

\begin{proof}[Proof of $\Cref{thm:main_result}$]
Let
$$
\mathcal{O}_n
:=
\{\rho\vdash n:m_1(\rho)\text{ is odd}\},
$$
and, for $\rho\in\mathcal{O}_n$, let $X_\rho$ denote the conjugacy class of
$S_n$ of cycle type $\rho$. 
Let $A_\rho$ be the adjacency matrix of $\Cay(S_n,X_\rho)$, for each $\rho\in \OO_n$ and consider the weighted adjacency matrix
$$
A
:=
\sum_{\rho\in\mathcal{O}_n}c_\rho A_\rho.
$$
By \Cref{lem:Weighted_Adjacency_eigenvlues}, for every
$\lambda\vdash n$, the corresponding eigenvalue is
$$
\theta_\lambda
=
\sum_{\rho\in\mathcal{O}_n}
c_\rho |X_\rho|\frac{\chi^\lambda(\rho)}{f^\lambda}.
$$

We want to find the weights $c_\rho$, $\rho\in \OO_n$ such that $\theta_{(n)}=1$ is the largest eigenvalue and the smallest eigenvalue is as large as possible. This leads to the following linear program:

\noindent\textbf{Primal LP:}

\noindent Minimize $y$ subject to:
\begin{eqnarray*}
    \sum_{\rho \in \OO} \left( \frac{\chi^{\lambda}(\rho) }{f^{\lambda}} \right) \omega_{\rho} + y &\ge& 0, \quad \text{for all } \lambda \neq (n),\\
    \sum_{\rho \in \OO} \omega_{\rho} &=& 1,\\
    \omega_\rho&\geq& 0, \qquad\rho\in\OO.
\end{eqnarray*}

In the above linear program, $\omega_\rho=c_\rho|X_\rho|$. We restrict to nonnegative weights $\omega_\rho$, and hence to $c_\rho\geq 0$. Although signed weights are allowed in the weighted Hoffman framework, this restriction merely narrows the class of admissible matrices and still yields a valid upper bound for $M(n)$.

Feasibility of the primal LP is easy: Take $\omega_{(n-1,1)}=1$ and all other $\omega_\rho$ zero, and $y=2$. Determining the optimal weights
$\omega_\rho$ for the primal LP directly appears difficult, therefore we consider the dual of this LP as follows.

\noindent\textbf{Dual LP:}

\noindent Maximize $z$ subject to:
\begin{eqnarray*}
    \sum_{\substack{\lambda\vdash n\\\lambda\neq(n)}} \left( \frac{\chi^{\lambda}(\rho) }{f^{\lambda}} \right) x_{\lambda} + z &\leq& 0, \quad \text{for all } \rho \in \OO,\\
    \sum_{\substack{\lambda\vdash n\\\lambda\neq(n)}} x_{\lambda} &=& 1,\\
    x_{\lambda}&\geq& 0, \qquad \lambda\vdash n,\; \lambda\neq (n).
\end{eqnarray*}
First, we construct a feasible solution for the dual. 
Take  
\begin{align*}
    z&=\frac{1}{(n-1)!!-1},\\
    x_{2\nu}&=\frac{f^{2\nu}}{(n-1)!!-1}\qquad \text{for all }\nu\vdash n/2,\;\nu\neq (n/2),
\end{align*}
and put all other $x_\lambda=0$. It is easy to verify that this is a feasible solution for every $n\geq 6$. In fact, we have verified in the SageMath for $n=6$ to $16$ that this is indeed an optimal solution. Assuming this as an optimal solution, we get $\theta_{\min}=-1/((n-1)!!-1)$, and therefore, Hoffman's ratio bound will give
$$M(n)\leq \frac{n!}{1+(n-1)!!-1}=n!!,$$
which will answer the Cameron--Deza--Frankl problem, but proving the optimality is difficult.

Let $z_n$ be the optimal solution for the dual. The purpose of the
character optimization problem studied in the previous section was precisely
to obtain a sufficiently strong upper bound for $z_n$. By
\Cref{thm:z_upper_bound}, there exist absolute constants
$C>0$ and $n_0$ such that, for every even $n\geq n_0$, we have
$$
z_n
\leq
\frac{
\exp\left(
\frac{n}{2}
+
C n^{3/4}\log n
\right)
}{
(n-1)!!
}.
$$
Let $y_n$ be the optimal solution of the primal, then by strong duality, we have
$$
y_n=z_n,
$$
and hence there exists a choice of weights $\omega_\rho$ for which the least
eigenvalue of the corresponding weighted adjacency matrix satisfies
$$\theta_{\min}\geq-\frac{\exp\left(\frac{n}{2}+C n^{3/4}\log n\right)}{(n-1)!!}.$$

Applying the weighted Hoffman bound gives
\begin{align*}
    M(n)&\leq\frac{n!}{1-\frac{1}{\theta_{\min}}}\\
    &\leq n!!\exp\left(\frac{n}{2}+C n^{3/4}\log n\right).
\end{align*}

Equivalently, for sufficiently large even $n$
$$
M(n)\leq e^{\, n/2+o(n)}\,n!!.
$$

For the lower bound, let $n=2m$. In its natural action on $[n]$, the wreath product $H=S_2\wr S_m\le S_n$ consists of all permutations preserving the partition into pairs
$$B_i=\{2i-1,2i\},\qquad 1\le i\le m.$$
Thus an element of $H$ may permute the pairs and independently swap the two points within each pair. Therefore it gives an even-intersecting family of size $|S_2\wr S_{n/2}|=n!!$.
\end{proof}

\subsection{The case of  Odd  \texorpdfstring{$n$}{n}}
\label{subsec:Even_intersecting_family_odd_n}

We first give a simple construction that works for every odd $n$. The idea is to take two odd cycles whose supports intersect in exactly one point and consider products of their powers.

\textbf{Construction 1:} 
Let $r$ and $s$ be odd numbers such that $n=r+s-1$. Define the following two cycles:
$$\alpha:=(1, 2, \ldots, r)\qquad\text{ and }\qquad\beta:=(r,r+1,\ldots,n).$$
Now, consider the family
$$\mathcal{F}:=\{\pi_{i,j}:=\beta^j\alpha^i: 0\leq i<r, 0\leq j<s\}\subseteq S_n.$$
Then, $\mathcal{F}$ is an even-intersecting family of size $rs$.

\begin{proof}
We compose permutations from right to left. Now, for any $i,i',j,j'\in [n]$, we want to know the intersection size $$|\pi_{i,j}\cap \pi_{i',j'}|=|\{x\in[n]: \pi_{i,j}(x)=\pi_{i',j'}(x)\}|.$$

\textbf{Case 1:} $i= i'$ and $j\neq j'$.

Note that, for $x\in [r+1, n]$, $\pi_{i,j}(x)=\beta^j(\alpha^i(x))=\beta^j(x)$, therefore $\pi_{i,j}(x)\neq \pi_{i,j'}(x)$ for every $x\in [r+1,n]$. Now suppose $\alpha^i(y)=r$, then for any $x\in [r]\setminus\{y\}$, we have $\pi_{i,j}(x)= \pi_{i,j'}(x)$ and $\pi_{i,j}(y)=\beta^j(r)\neq\beta^{j'}(r)= \pi_{i,j'}(y)$. Therefore $|\pi_{i,j}\cap \pi_{i,j'}|=r-1$.

\textbf{Case 2:} $i\neq i'$ and $j= j'$.

Note that $\pi_{i,j}(x)=\pi_{i',j}(x)$ if and only if $\alpha^i(x)=\alpha^{i'}(x)$. Also, $\alpha^i(x)=\alpha^{i'}(x)$ only for $x\in [r+1, n]$. Therefore $\pi_{i,j}(x)=\pi_{i',j}(x)$ only for $x\in [r+1, n]$. Hence,  $|\pi_{i,j}\cap \pi_{i',j}|=s-1$.

\textbf{Case 3:} $i\neq i'$ and $j\neq j'$.

Again, if $x\in[r+1,n]$, then $\pi_{i,j}(x)=\beta^j(x)\neq \beta^{j'}(x)=\pi_{i',j'}(x)$. For $x\in [r]$, if $\alpha^i(x)\neq r$ then $\beta^j(\alpha^i(x))=\alpha^i(x)$. Also, at most one of $\alpha^i(x)$ or $\alpha^{i'}(x)$ can be $r$, therefore  $\pi_{i,j}(x)\neq\pi_{i',j'}(x)$ for all $x\in [r]$. Hence, $|\pi_{i,j}\cap \pi_{i',j'}|=0$.

Since, $r$ and $s$ are odd, $\mathcal{F}$ is an even-intersecting family of size $rs$. 
\end{proof}

\begin{proof}[Proof of $\Cref{Thm:loose lower bound}$]
    If $n=4k+3$, then take $r=2k+1$ and $s=2k+3$, then we have an even-intersecting family of size $4k^2+8k+3=(n+1)^2/4-1$. Similarly, if $n=4k+1$, taking $r=s=2k+1$, we have an even-intersecting family of size $(2k+1)^2=(n+1)^2/4$.
\end{proof}

Since $$rs\leq \frac{(r+s)^2}{4}=\frac{(n+1)^2}{4},$$ varying the cycle lengths in Construction 1 cannot produce a family larger than $(n+1)^2/4$. In that construction, the supports of $\alpha$ and $\beta$ intersect only at the point $r$. It is therefore natural to ask whether allowing the cycles to share more points can yield larger families while preserving the even-intersection property. The following construction answers this question affirmatively.

\textbf{Construction 2:} Let $v\geq 3$ be odd integer and let $D=\{d_1<d_2<\cdots<d_k\}\subseteq [v]$, where $k$ is even. Put $n:=v+k$ and choose the  cycles in $S_n$ as $$\alpha:=(1,2,\ldots,v),\qquad\text{and}\qquad \beta:=(b_1,b_2,\ldots,b_{v}),$$
where $b_t=t$ if $t\notin D$ and $b_t=v+j$ if $t=d_j$. Now define $\alpha^h(D):=\{\alpha^h(d):d\in D\}$. Suppose that $|D\cap \alpha^h(D)|$ is odd for every $1\leq h\leq v-1$. Now define the family
$$\mathcal{F}_D:=\{\pi_{i,j}:=\beta^j\alpha^i: 0\leq i,j<v\}\subseteq S_n.$$
Then, $\mathcal{F}_D$ is an even-intersecting family of size $v^2$.
\begin{proof}
    Take any two elements $\pi_{i,j},\,\pi_{i',j'}\in \mathcal{F}_D$ and set $a=i'-i$ and $b=j'-j$. Thus, we can write 
    $$|\pi_{i,j}\cap \pi_{i',j'}|=|\Fix\left(\beta^b\alpha^a\right)|.$$
    If any of $a$ or $b$ is zero then $|\pi_{i,j}\cap \pi_{i',j'}|$ is even as $|\Fix\left(\alpha^a\right)|=|\Fix\left(\beta^b\right)|=k$ for every $1\leq a,b\leq v-1$. Therefore, we now assume that both $a$ and $b$ are nonzero. If $x\notin C:=[v]\setminus D$, then $x$ is fixed by exactly one of $\alpha$ or $\beta$ and not  fixed by the other one. Therefore $\beta^b\alpha^a(x)\neq x$. Hence, the fixed points of $\beta^b\alpha^a$ are in $C$. 
    Now suppose $x\in C$ such that $\beta^b\alpha^a(x)= x$, then we must have $\alpha^a(x)\in C$ and $\alpha^{a+b}(x)=x$. Also, note that if $\alpha^{a+b}\neq \operatorname{Id}$, then it fixes no point from $C$. Therefore, consider $\alpha^{a+b}=\operatorname{Id}$. This gives
    \begin{align*}
        |\pi_{i,j}\cap \pi_{i',j'}|&=|\{x\in C:\alpha^a(x)\in C\}|\\
        &=|C\cap \alpha^{-a}(C)|\\
        &=v-2k+|D\cap \alpha^{-a}(D)|.
    \end{align*}
    Since $|D\cap \alpha^{-a}(D)|=|D\cap \alpha^{h}(D)|$ for some $1\leq h\leq v-1$ when $a$ is nonzero, therefore $|\pi_{i,j}\cap \pi_{i',j'}|$ is even. Thus $\mathcal{F}_D$ is an even-intersecting family.
\end{proof}
Note that if we take $D=\{1,2,\ldots,v-1\}$, then construction 2 becomes construction 1 with $r=s=v$. Now the question is whether we can choose a set $D$, which gives larger intersecting family? First, consider the following example.
\begin{example}
    Take $v=13$, therefore $\alpha=(1,2,\ldots,13)$, and $D=\{1,2,4,10\}$. One can easily verify that 
    $$|D\cap \alpha^h(D)|=1\qquad \text{for all }1\leq h\leq 12.$$
    Therefore, the construction 2 gives an even-intersecting family $\mathcal{F}_D$ of size $v^2=169$ inside $S_{17}$. For comparison, the construction 1 gives family of size $81$ for $n=17$.
\end{example}

The following lemma guaranties the existence of such sets (called Singer difference sets). For the proof one can see the book by Lint and Wilson \cite{van2001course}.

\begin{lemma}[Singer \cite{singer1938theorem}, Lint and Wilson \cite{van2001course}]\label{lem:SingerSet}
    Let $q$ be a prime power, put $v=q^2+q+1$, and let $\alpha=(1,2,\ldots,v)$. There exists a subset $D\subseteq [v]$ of size $q+1$ such that 
    $$|D\cap \alpha^h(D)|=1\qquad \text{for all }1\leq h\leq v-1.$$
\end{lemma}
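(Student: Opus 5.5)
The plan is to realise $D$ as a line of the Desarguesian projective plane $\mathrm{PG}(2,q)$, on which a cyclic group of order $v=q^2+q+1$ acts regularly on points (a Singer cycle). Let $K=\mathbb{F}_{q^3}$, viewed as a $3$-dimensional vector space over $F=\mathbb{F}_q$. The points of $\mathrm{PG}(2,q)$ are the $1$-dimensional $F$-subspaces of $K$, i.e.\ the cosets of $F^*$ in $K^*$. Their number is $(q^3-1)/(q-1)=v$. Fix a primitive element $g$ of $K^*$. The quotient $K^*/F^*$ is cyclic of order $v$, generated by the image $\bar g$, so the points are exactly $\bar g^0,\bar g^1,\ldots,\bar g^{v-1}$.

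Next fix any $2$-dimensional $F$-subspace $W\subseteq K$, for instance the kernel of the trace map $K\to F$. It contains $(q^2-1)/(q-1)=q+1$ points. Put
$$D_0:=\{i\in\mathbb{Z}/v\mathbb{Z}: g^i\in W\},$$
which is well defined because $W$ is closed under $F^*$, and has $|D_0|=q+1$. Multiplication by $g^h$ is an $F$-linear bijection of $K$. It sends $W$ to the $2$-dimensional subspace $g^hW$, and it sends the point $\bar g^i$ to $\bar g^{i+h}$. Hence $D_0+h$ is exactly the set of exponents of points lying in $g^hW$.

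The key step, and the only non-formal one, is to show that $g^hW\neq W$ for $h\not\equiv0\pmod v$. Suppose $g^hW=W$. Then $W$ is a module over the subfield $F(g^h)$. Since $[K:F]=3$ is prime, this subfield is either $F$ or $K$. It cannot be $K$, because a nonzero $K$-subspace of $K$ has $F$-dimension $3$, not $2$. So $g^h\in F^*$, which means $\bar g^h=1$ and $v\mid h$. Thus for $h\not\equiv 0$ the subspaces $W$ and $g^hW$ are distinct $2$-dimensional subspaces of a $3$-dimensional space. By the dimension formula, $\dim(W\cap g^hW)=2+2-3=1$, so they share exactly one point. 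Therefore $|D_0\cap(D_0+h)|=1$ for all $1\le h\le v-1$.

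Finally, transfer this to $[v]$ and the cycle $\alpha=(1,2,\ldots,v)$. Identify $i\in\mathbb{Z}/v\mathbb{Z}$ with the element of $[v]$ congruent to $i$ modulo $v$ (residue $0$ goes to $v$). Under this identification $\alpha^h$ acts as $x\mapsto x+h$. Let $D\subseteq[v]$ be the image of $D_0$. Then $\alpha^h(D)$ is the image of $D_0+h$, so $|D|=q+1$ and $|D\cap\alpha^h(D)|=1$ for all $1\le h\le v-1$, as required.
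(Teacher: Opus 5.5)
Your proof is correct. It is the classical Singer construction, in which $\mathbb{F}_{q^3}^*/\mathbb{F}_q^*$ acts regularly on the points of $\mathrm{PG}(2,q)$ and a line serves as the planar difference set; this is the argument the paper relies on by citing Singer and van Lint--Wilson instead of giving a proof. Your subfield argument for $g^hW\neq W$ cleanly uses that $[K:F]=3$ is prime, and your final identification of $\alpha^h$ with translation by $h$ on $\mathbb{Z}/v\mathbb{Z}$ correctly matches the paper's formulation.
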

\begin{proof}[Proof of $\Cref{Thm:tight lower bound}$]

Take $q$ to be odd prime power, there exists a subset $D$ satisfying \Cref{lem:SingerSet}. Therefore, the construction 2 gives an even-intersecting family of size $(q^2+q+1)^2$ inside $S_{q^2+2q+2}$.  Hence $$M(q^2+2q+2)\geq (q^2+q+1)^2.$$
Taking $n=q^2+2q+2$, we have 
$M(n)\geq (n-\sqrt{n-1})^2=n^2-O(n^{3/2}).$
\end{proof}

\begin{acknowledgement}
The authors thank Hiranya Kishore Dey for bringing this problem to their attention and for many fruitful discussions, particularly regarding the construction for odd $n$.
 This research work has no associated data. 
 The work of the author Anirban Banerjee is supported by the Department of Biotechnology, India (No. BT/PR40182/BTIS/137/88/2025).
  The work of the author Abisek Dewan is supported by University Grants Commission, India (Beneficiary Code/Flag: BWBDA00147662 U).
 The work of the author Rajiv Mishra is supported by Council of Scientific $\And$ Industrial Research, India(File number: 09/921(0347)/2021-EMR-I).   
\end{acknowledgement}

\bibliographystyle{siam}
 	\bibliography{evenbib}
    \nocite{*}

\appendix
\section{Asymptotic size of the Cameron--Deza--Frankl character sum}
\label{app:CDF-asymptotic}

\begin{lemma}\label{lem:CDF-sum-asymptotic}
For even $n$, let
$$
S(n):=\sum_{\substack{\lambda\vdash n\\\lambda_1\geq n/2}}
(f^\lambda)^2.
$$
For every integer $m\geq 1$,
$$
\frac{1}{(m+1)^2}\binom{2m}{m}^{2}m!
\leq S(2m)
\leq \frac{1}{2}\binom{2m}{m}^{2}m!.
$$
Consequently, as $n\to\infty$ through even integers,
$$
S(n)=8^{n/2+O(\log n)}\,n!!.
$$
\end{lemma}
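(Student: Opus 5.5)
We need to prove bounds on $S(2m)=\sum_{\lambda\vdash 2m,\ \lambda_1\ge m}(f^\lambda)^2$. The plan is to get the upper bound from an embedding into a permutation module and the lower bound from a single explicit term.

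\textbf{Upper bound.} Consider the permutation representation $M=\Ind_{S_m\times S_m}^{S_{2m}}\mathbf 1$, which is the Frobenius image of $h_m^2=h_mh_m$. By Pieri's rule (\Cref{lem:Pieri's_Formula}),
$$h_m^2=\sum_{k=0}^{m}s_{(2m-k,k)}.$$
This already contains only two-row shapes. That is too few shapes to dominate the sum, so I instead use Young's rule for arbitrary $\lambda$ with $\lambda_1\ge m$.

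The claim is that $K_{\lambda,(m,\,1^{m})}\ge f^{\overline{\lambda}}$ whenever $\lambda_1\ge m$. The reason is that a semistandard tableau of content $(m,1^m)$ puts the $m$ ones in row one. The remaining entries $2,\dots,m+1$ fill $\lambda/(m)$ as a standard filling. For $\lambda_1\ge m$, that skew shape contains the fillings placing $\overline\lambda$ with its own standard tableaux after shifting.

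I would instead use the cleaner bound $f^\lambda\le\binom{n}{d}f^{\overline\lambda}$, obtained by choosing which entries go outside the first row. Then
$$S(n)\le \sum_{d\le m}\binom{2m}{d}^2\sum_{\mu\vdash d}(f^\mu)^2=\sum_{d\le m}\binom{2m}{d}^2 d!.$$
The terms $\binom{2m}{d}^2d!$ increase in $d$ up to $d=m$, since the ratio of consecutive terms is $(2m-d)^2/(d+1)\ge1$. Hence the sum is dominated by roughly the last term times a geometric factor. The ratio of consecutive terms near $d=m$ is about $m$, so the total is $\le \binom{2m}{m}^2 m!\,(1+O(1/m))$. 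Getting the exact constant $1/2$ needs care. I expect to sharpen it by noting that for $d=m$ the bound $f^\lambda\le\binom{2m}{m}f^{\overline\lambda}$ loses at least a factor: for $\lambda=(m,\overline\lambda)$, the hook formula gives $f^\lambda=\binom{2m}{m}f^{\overline\lambda}\cdot\prod_{j}(\text{first-row correction})$. The first-row hooks exceed $m-j+1$ by at least $1$ in column $1$, which yields a factor $\le 1/2$ on average. Alternatively, only a factor $O(\operatorname{poly})$ matters for the asymptotic, so if the constant $\tfrac12$ resists I would settle for that.

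\textbf{Lower bound.} Take the single term $\lambda=(m,\mu)$ with $\mu\vdash m$ and sum over $\mu$ with $\mu_1\le m$ (all $\mu$). By the hook formula,
$$f^{(m,\mu)}=\binom{2m}{m}f^\mu\prod_{j=1}^{m}\frac{m-j+1}{h_{1j}}.$$
Here $h_{1j}=m-j+1+\mu'_j\le m-j+1+m$, which is too lossy. Instead I use $h_{1j}=m-j+1+\mu'_j$ together with the hook-length identity relating these first-row hooks to the ratio $f^{(m,\mu)}/f^{\mu}$. Via \Cref{lem:fTtau_bound_adding_one_box}-type telescoping, the product is bounded below by $1/(m+1)$. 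Summing $(f^\mu)^2$ over $\mu\vdash m$ gives $m!$, and hence the lower bound $\binom{2m}{m}^2m!/(m+1)^2$. This product estimate is the main obstacle. I would try to prove it by comparing with the column-shape extreme case via an induction that adds boxes to $\mu$.

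\textbf{Asymptotics.} From $\binom{2m}{m}=4^m m^{-1/2}\Theta(1)$ we get $\binom{2m}{m}^2m!=16^m m!\,\mathrm{poly}(m)$. Since $n!!=2^m m!$, this equals $8^m\,n!!\,\mathrm{poly}(m)=8^{n/2+O(\log n)}n!!$.
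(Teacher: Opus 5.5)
Your proposal has two genuine gaps, one in each inequality. \textbf{Upper bound.} The injection behind $f^\lambda\le\binom{2m}{d}f^{\overline\lambda}$ is valid. But the resulting sum $\sum_{d\le m}\binom{2m}{d}^2d!$ already contains the term $\binom{2m}{m}^2m!$ at $d=m$, so it can never give the stated $\frac12\binom{2m}{m}^2m!$; it only gives $(1+O(1/m))\binom{2m}{m}^2m!$. Your proposed repair is not an argument: the extra box in the hook of $(1,1)$ only buys a factor $\frac{m}{m+1}$, and ``a factor $\le\frac12$ on average'' is not justified. The missing observation, which the paper uses, is that the entry $1$ always lies in the first row. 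So the entries below the first row are chosen from $\{2,\ldots,2m\}$, and $f^{(2m-k,\mu)}\le\binom{2m-1}{k}f^\mu$. Since $\binom{2m-1}{k}\le\binom{2m-1}{m}=\frac12\binom{2m}{m}$ for all $0\le k\le m$, and $\sum_{k=0}^m k!\le 2\,m!$, this gives $S(2m)\le\frac14\binom{2m}{m}^2\sum_{k\le m}k!\le\frac12\binom{2m}{m}^2m!$ for every $m\ge1$, with no tail estimate needed.

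\textbf{Lower bound.} Your hook-formula identity for $f^{(m,\mu)}$ is right, and it correctly reduces the problem to showing $\prod_{j=1}^m\frac{m-j+1}{h_{1j}}\ge\frac1{m+1}$ for all $\mu\vdash m$. You then leave this as ``the main obstacle'', and the routes you suggest point the wrong way. \Cref{lem:fTtau_bound_adding_one_box} is an \emph{upper} bound on such ratios and cannot produce a lower bound on $f^{(m,\mu)}$. Also, the extremal case is not the column $\mu=(1^m)$, where the product is $\frac12$, but the row $\mu=(m)$, i.e.\ $\lambda=(m,m)$, where it equals exactly $\frac1{m+1}$.

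The paper proves $f^{(m,\mu)}\ge f^{(m,m)}f^\mu$ by an injection. Take a standard tableau of shape $(m,m)$ with rows $a_1<\cdots<a_m$ and $b_1<\cdots<b_m$, and a standard tableau $T$ of shape $\mu$. Replace each entry $k$ of $T$ by $b_k$ and place the row $a_1,\ldots,a_m$ on top. An entry $k$ in column $j$ of the first row of $T$ has $k\ge j$, so $a_j<b_j\le b_k$ and the result is standard. With $f^{(m,m)}=\frac1{m+1}\binom{2m}{m}$ and $\sum_{\mu\vdash m}(f^\mu)^2=m!$, the lower bound follows.

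If you prefer to keep your product, it also closes directly. Write $c_j=m-j+1$ and let $\mu'_j$ be the column lengths of $\mu$. Bernoulli's inequality gives $1+\mu'_j/c_j\le(1+1/c_j)^{\mu'_j}$. Since $\mu'_j$ is nonincreasing with $\sum_j\mu'_j=m$, while $\log(1+1/c_j)$ is nondecreasing in $j$, Chebyshev's sum inequality gives $\sum_j\mu'_j\log(1+1/c_j)\le\sum_{j=1}^m\log(1+1/c_j)=\log(m+1)$. Your final asymptotic step is fine once both bounds are in place, and it only needs them up to polynomial factors.
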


\begin{proof}
\emph{Upper bound.}
Every partition of $2m$ with first row at least $m$ can be written
uniquely as $(2m-k,\mu)$, where $0\leq k\leq m$ and $\mu\vdash k$.
In a standard tableau of this shape, the entry $1$ belongs to the first
row. Thus the $k$ entries below the first row can be chosen in at most
$\binom{2m-1}{k}$ ways. Therefore
$$
f^{(2m-k,\mu)}\leq \binom{2m-1}{k}f^\mu.
$$
By the definition of $S(2m)$, we have
\begin{align*}
    S(2m)&\leq \sum_{k=0}^{m}\sum_{\mu\vdash k}\left(\binom{2m-1}{k}f^{\mu}\right)^2\\
    &\leq \sum_{k=0}^{m}\binom{2m-1}{k}^{2}k!\\
    &\leq \frac{1}{4}\binom{2m}{m}^{2}\sum_{k=0}^{m}k!\\
    &\leq  \frac{1}{2}\binom{2m}{m}^{2}m!.
\end{align*}

\medskip
\noindent\emph{Lower bound.}
We first count standard tableaux of shape $(m,m)$. Using hook-length formula, we get 
$$
f^{(m,m)}
=\frac{(2m)!}{(m+1)!m!}
=\frac{1}{m+1}\binom{2m}{m}.
$$

For every $\mu\vdash m$, we next show that
$$
f^{(m,\mu)}\geq f^{(m,m)}f^\mu.
$$
Take a standard tableau of shape $(m,m)$ whose rows are
$$
a_1<\cdots<a_m
\qquad\text{and}\qquad
b_1<\cdots<b_m.
$$
Take independently a standard tableau $T$ of shape $\mu$, replace each
entry $k$ of $T$ by $b_k$, and place the row $a_1,\ldots,a_m$ above it.
The resulting filling has shape $(m,\mu)$. The row and column
inequalities within the lower rows are preserved. If the entry of $T$
in its first row and column $j$ is $k$, then $k\geq j$, so
$$
a_j<b_j\leq b_k.
$$
Thus the inequalities between the new first and second rows hold as
well, and the resulting tableau is standard.

The construction is injective: the new first row recovers the $a_j$.
The remaining entries, in increasing order, recover the $b_j$, and
replacing each $b_k$ in the lower rows by $k$ recovers $T$.
Therefore, 
$$
\begin{aligned}
S(2m)
&\geq\sum_{\mu\vdash m}\bigl(f^{(m,\mu)}\bigr)^2\\
&\geq\frac{1}{(m+1)^2}\binom{2m}{m}^{2}
\sum_{\mu\vdash m}(f^\mu)^2\\
&=\frac{1}{(m+1)^2}\binom{2m}{m}^{2}m!.
\end{aligned}
$$
This proves the two-sided bound.

\medskip
\noindent\emph{Asymptotic conclusion.}
The central binomial coefficient is the largest coefficient in the
binomial expansion of $(1+1)^{2m}$. Thus
$$
\frac{4^m}{2m+1}\leq\binom{2m}{m}\leq 4^m.
$$
Using the two-sided bound and $(2m)!!=2^m m!$, we obtain
$$
\frac{8^m(2m)!!}{(m+1)^2(2m+1)^2}
\leq S(2m)\leq\frac{1}{2} 8^m(2m)!!.
$$
Taking logarithms gives
$$
\log\!\left(\frac{S(2m)}{8^m(2m)!!}\right)=O(\log m).
$$
Therefore $S(2m)=8^{m+O(\log m)}(2m)!!$, as required.
\end{proof}

\end{document}